\documentclass[11pt]{article}
\usepackage{amsfonts}
\usepackage{amssymb,amsmath}
\usepackage{amsthm}
\usepackage{latexsym}
\usepackage{graphics}
\usepackage{epic}
\usepackage{epsfig}
\usepackage{psfrag}
\usepackage{bbm}
\usepackage{dsfont}
\usepackage{enumitem}
\usepackage{stmaryrd}
\usepackage{xcolor}
\usepackage{hyperref} 
\usepackage{soul} 

\numberwithin{equation}{section}
\def\PP{\mathbb{P}}

\def\RR{\mathbb{R}}
\def\NN{\mathbb{N}}

\def\EE{\mathbb{E}}
\def\11{\mathbbm{1}}

\def\E{\mathbb{E}}
\def\P{\mathbb{P}}
\def\R{\mathbb{R}}

\def\N{\mathbb{N}}

\def\d{\partial}

\def\ZZ{\mathbb{Z}}

\newtheorem{thm}{Theorem}[section]
\newtheorem{lem}[thm]{Lemma}

\newtheorem{dfn}[thm]{Definition}
\newtheorem{prop}[thm]{Proposition}

\newtheorem{hyp}{Assumption}

\theoremstyle{remark}
\newtheorem{rem}{Remark}



\begin{document}

\title{Lyapunov criteria for uniform convergence of conditional distributions of absorbed Markov processes}

\author{Nicolas Champagnat$^{1,2,3}$, Denis Villemonais$^{1,2,3}$}

\footnotetext[1]{Universit\'e de Lorraine, IECL, UMR 7502, Campus Scientifique, B.P. 70239,
  Vand{\oe}uvre-l\`es-Nancy Cedex, F-54506, France}
\footnotetext[2]{CNRS, IECL, UMR 7502,
  Vand{\oe}uvre-l\`es-Nancy, F-54506, France} \footnotetext[3]{Inria, TOSCA team,
  Villers-l\`es-Nancy, F-54600, France.\\
  E-mail: Nicolas.Champagnat@inria.fr, Denis.Villemonais@univ-lorraine.fr}

\maketitle

\begin{abstract}
  We study the uniform convergence to quasi-stationarity of
  multidimensional processes absorbed when one of the coordinates
  vanishes. Our results cover competitive or weakly cooperative
  Lotka-Volterra birth and death processes and Feller diffusions with
  competitive Lotka-Volterra interaction. To this aim, we develop an
  original non-linear Lyapunov criterion involving two functions, which
  applies to general Markov processes.
\end{abstract}

\noindent\textit{Keywords:} {stochastic Lotka-Volterra systems; multitype population dynamics; multidimensional birth and death
  process; multidimensional Feller diffusions; process absorbed on the boundary; quasi-stationary distribution; uniform exponential
  mixing property; Lyapunov function}

\medskip\noindent\textit{2010 Mathematics Subject Classification.} Primary: {60J27; 37A25; 60B10}. Secondary: {92D25; 92D40}.

\section{Introduction}
\label{sec:intro}

We consider a Markov process $(X_t,t\geq 0)$ evolving in a state space
$E\cup\d$, where $\d\cap E=\emptyset$ and $\d$ is absorbing. A
quasi-stationary distribution for $X$ is a probability measure
$\nu_{QSD}$ on $E$ such that
$$
\PP_{\nu_{QSD}}(X_t\in\cdot\mid t<\tau_\d)=\nu_{QSD},\quad\forall t\geq 0,
$$
where $\tau_\d$ is the first hitting time of $\d$ and, for any probability measure $\nu$ on $E$, $\PP_{\nu}$ is the law of $X$ with
initial distribution $\nu$.

Our goal is to provide a computational method, taking the form of a
nonlinear \textit{Lyapunov type condition} (sometimes also referred to
as \textit{drift condition}) ensuring the existence and uniqueness of
a quasi-stationary distribution and the uniform convergence in total
variation of the law of $X_t$ given $X_t\not\in\d$ when
$t\rightarrow+\infty$ to this quasi-stationary distribution, which
means that there exist two constants $\gamma,C>0$ such that
\begin{align}
  \label{eqNexpoconvintro}
  \|\P_\mu(X_t\in\cdot\mid t<\tau_\d)-\nu_{QSD}\|_{TV}\leq C e^{-\gamma t},\quad\forall t\geq 0,
\end{align}
for all initial distribution $\mu$ on $E$, where $\|\cdot\|_{TV}$ is the usual total variation distance on the set of finite, signed
measures on $E$, defined by $\|\mu\|_{TV}=\sup_{f\in L^\infty(E),\ \|f\|_\infty\leq 1}|\mu(f)|$. We apply this result to two standard
models in ecology and evolution, called Lotka-Volterra (or logistic) birth-death or diffusion
processes~\cite{Lambert2005,champagnat-ferriere-al-06,champagnat-ferriere-al-08}, which have attracted a lot of attention in the past
and for which the question of uniform convergence toward a quasi-stationary distribution remains largely open in the
multi-dimensional case.

Practical (linear) Lyapunov type criteria for convergence to quasi-sta\-tio\-na\-ry distributions were also developed
in~\cite{ChampagnatVillemonais2017}. However, these results usually only entail non-uniform convergence with respect to the initial
condition. In the applications we consider here, the results of Sections~4 and~5 in~\cite{ChampagnatVillemonais2017} would ensure
the existence of two positive functions $\varphi_1\geq 1$ and $\varphi_2\leq 1$ on $E$ such that
\begin{align}
  \label{eqNexpoconvintro2}
  \|\P_\mu(X_t\in\cdot\mid t<\tau_\d)-\nu_{QSD}\|_{TV}\leq C e^{-\gamma t}\frac{\mu(\varphi_1)}{\mu(\varphi_2)},\quad\forall t\geq 0,
\end{align}
for all initial distribution $\mu$ on $E$. In the cases considered
below, the function $\varphi_1$ may be taken bounded, but $\varphi_2$
is usually not bounded away from zero close to the boundary of $E$,
which leads to a non-uniform convergence result.

As may be expected, the stronger convergence result~\eqref{eqNexpoconvintro} requires a finer control of the behavior of the process
near the boundary, uniformly in $E$. As a consequence, our Lyapunov criteria are more involved than those
of~\cite{ChampagnatVillemonais2017} and the techniques used here differ: in the present article, we use the control
of the derivative of the continuous-time semi-group of the process to localize the conditioned process when it starts close to the
boundary (see Proposition~\ref{prop:Dynkin-conditionnel}), while~\cite{ChampagnatVillemonais2017} makes use of the time-discretisation of the
semi-group for this purpose. Additional arguments are also required to control the behavior of the conditioned process close to
infinity. Note however that, in the end, our approach relies on the discrete time criterion of~\cite{ChampagnatVillemonais2016b} and
it is tempting to think that one may use a combination of the present approach and of the discrete time criterion
of~\cite{ChampagnatVillemonais2017} in order to prove~\eqref{eqNexpoconvintro2} with $\varphi_2=1$. Although this would lead to more
complicated criteria than the one presented below, such an approach could also apply to processes that do not come down from
infinity, such as Orstein-Uhlenbeck processes, with uniform convergence among initial distributions in bounded subsets. 
We leave this question for future research.

The uniform convergence~\eqref{eqNexpoconvintro} is also transferred to other properties of the process. Indeed, it provides
uniformity in the time to the so-called \emph{mortality/extinction plateau} (see~\cite{MV12}), uniform convergence of the process
conditioned to late survival to the so-called $Q$-process, uniform exponential ergodicity of the $Q$-process
(see~\cite{ChampagnatVillemonais2016}), and uniform convergence of $x\mapsto e^{\lambda_0 t}\PP_x(t<\tau_\d)$ to an eigenfunction
$\eta:E\rightarrow(0,+\infty)$ for the semigroup, for some positive constant $\lambda_0>0$, when
$t\rightarrow+\infty$ (see~\cite[Theorem~2.5]{ChampagnatVillemonais2016b}).

\bigskip One of the main tools of our proofs is~\cite{ChampagnatVillemonais2016b}, where we showed that the uniform exponential
convergence of conditional distributions in total variation to a unique quasi-stationary distribution is equivalent to the following
conditions: there exists a probability measure $\nu$ on $E$ such that
\begin{itemize}
\item[(A1)] there exist $t'_0,c'_1>0$ such that for all $x\in E$,
  $$
  \PP_x(X_{t'_0}\in\cdot\mid t'_0<\tau_\d)\geq c'_1\nu(\cdot);
  $$
\item[(A2)] there exists $c'_2>0$ such that for all $x\in E$ and $t\geq 0$,
  $$
  \PP_\nu(t<\tau_\d)\geq c'_2\PP_x(t<\tau_\d).
  $$
\end{itemize}
Although it has the merit of generality, this criterion appears to be hard to check in
practice~\cite{ChampagnatVillemonais2016,ChampagnatCoulibaly-PasquierEtAl2016,ChampagnatVillemonais2015,ChampagnatVillemonais2015b,ChazottesColletEtAl2017,HortonKyprianouEtAl2018}.
In particular, it lacks computational methods for verification. This is one of the purposes of the new criterion we present. It
involves two bounded nonnegative functions $V$ and $\varphi$ such that $V(x)/\varphi(x)\rightarrow+\infty$ when $x$ converges to the
boundary of $E$ or to $\infty$, satisfying
\[
  -L\varphi\leq C_1\mathbbm{1}_K
\]
for some bounded subset $K$ of $E$ and
\[
  LV+C_2\frac{V^{1+\varepsilon}}{\varphi^\varepsilon}\leq C_3\varphi
\]
for some $\varepsilon>0$ and some constants $C_1,C_2,C_3>0$, where $L$ denotes (an extension of) the infinitesimal generator of the
Markov process $X$.

We apply this criterion to Lotka-Volterra birth and death processes,
and to competitive Lotka-Volterra Feller diffusion processes.  The
quasi-stationary behavior of (extensions of) these two models have
received a lot of attention in the one-dimensional
case~\cite{DoornErik1991,SteinsaltzEvans2007,KolbSteinsaltz2012,MartinezSanMartinEtAl2014,CCLMMS09,ChampagnatVillemonais2015,Chazottes2016,HeningKolb2019}.
We focus here on the multidimensional case, where the processes evolve
on the state spaces $E\cup\d=\mathbb{Z}_+^d$ for birth and death
processes (with $\mathbb{Z}_+=\{0,1,2,\ldots\}$) and
$E\cup\d=\mathbb{R}_+^d$ for diffusion processes, with $d\geq 2$, and
where absorption corresponds to the extinction of a single
population. This means that
$\d=\mathbb{Z}^d_+\setminus\mathbb{N}^d$ and $E=\mathbb{N}^d$ (where
$\mathbb{N}=\{1,2,\ldots\}$) for multidimensional birth and death
processes and $\d=\mathbb{R}_+^d\setminus(0,+\infty)^d$ and
$E=(0,+\infty)^d$ for multidimensional diffusions. Non-uniform
exponential convergence to quasi-stationary distributions for such
processes can  be obtained
using~\cite{ChampagnatVillemonais2017},~\cite{Velleret2018}
or~\cite{Ferre2018}.

\begin{rem}
  The case where absorption corresponds to the extinction of the whole population, i.e. $\d=\{(0,\ldots,0)\}$, can be handled
  combining the results of the present paper and those known in the one-dimensional
  case~\cite{ChampagnatVillemonais2016,ChampagnatVillemonais2015} following the methods
  of~\cite[Thm.\,1.1]{cattiaux-meleard-10}. This case was also considered in~\cite{ChazottesColletEtAl2017}.
\end{rem}

A Lotka-Volterra birth and death process in dimension $d\geq 2$ is a Markov process $(X_t,t\geq 0)$ on $\mathbb{Z}_+^d$
with transition rates $q_{n,m}$ from $n=(n_1,\ldots,n_d)\in\mathbb{Z}_+^d$ to $m\neq n$ in $\mathbb{Z}_+^d$ given by
\begin{equation*}
  q_{n,m}=
  \begin{cases}
    n_i(\lambda_i+\sum_{j=1}^d\gamma_{ij}n_j) & \text{if }m=n+e_i, \text{ for some $i\in\{1,\ldots,d\}$}\\
    n_i(\mu_i+\sum_{j=1}^d c_{ij}n_j) & \text{if }m=n-e_i, \text{ for some $i\in\{1,\ldots,d\}$}\\
    0 & \text{otherwise,}
  \end{cases}
\end{equation*}
where $e_i=(0,\ldots,0,1,0,\ldots,0)$ where the 1 is at the $i$-th coordinate. Note that 
the set $\d=\mathbb{Z}^d_+\setminus\mathbb{N}^d$ is absorbing for the process. We make the usual convention that
$$
q_{n,n}:=-q_n:=-\sum_{m\neq n}q_{n,m}.
$$
From the biological point of view, the constant $\lambda_i>0$ is the birth rate per individual of type $i\in\{1,\ldots,d\}$, the
constant $\mu_i>0$ is the death rate per individual of type $i$, $c_{ij}\geq 0$ is the rate of death of an individual of type $i$ from
competition with an individual of type $j$, and $\gamma_{ij}\geq 0$ is the rate of birth of an individual of type $i$ from cooperation with
(or predation of) an individual of type $j$.
In general, a Lotka-Volterra process could be explosive if some of the $\gamma_{ij}$ are positive, but the assumptions of the next
theorem ensure that it is not the case and that the process is almost surely absorbed in finite time.

\begin{thm}
  \label{thm:main-PNM-LV}
  Consider a competitive Lotka-Volterra birth and death process $(X_t,t\geq 0)$ in $\mathbb{Z}_+^d$ as above. Assume that the matrix
  $(c_{ij}-\gamma_{ij})_{1\leq i,j\leq d}$ defines a positive operator on $\R_+^d$ in the sense that, for all
  $(x_1,\ldots,x_d)\in\R_+^d\setminus\{0\}$, $\sum_{ij}x_i(c_{ij}-\gamma_{ij})x_j>0$. Then the process has a unique quasi-stationary
  distribution $\nu_{QSD}$ and there exist constants $C,\gamma>0$ such that, for all probability measures $\mu$ on $E=\N^d$,
  \begin{equation*}
    \left\|\PP_\mu(X_t\in\cdot\mid t<\tau_\d)-\nu_{QSD}\right\|_{TV}\leq C e^{-\gamma t},\quad \forall t\geq 0.
  \end{equation*}
\end{thm}

An important difficulty here is the fact that the
absorption rate (i.e.\ the rate of jump from a state in $E$ to a state
in $\d$) is not bounded. Birth and death processes with bounded absorption rates are much easier to study, cf.\
e.g.~\cite{ChampagnatVillemonais2016b}. The
existence of a quasi-stationary distribution for this kind of
multi-dimensional birth and death processes can also be obtained using
the theory of $R$-positive matrices, as exposed
in~\cite{FerrariKestenEtAl1996}, but without the uniform
exponential convergence~\eqref{eqNexpoconvintro}.

\medskip

A competitive Lotka-Volterra Feller diffusion process in dimension $d\geq 2$ is a Markov process $(X_t,t\geq 0)$ on $\mathbb{R}_+^d$,
where $X_t=(X^1_t,\ldots,X^d_t)$, is a solution of the stochastic differential equation
\begin{equation}
  \label{eq:SDE-LV}
  dX^i_t=\sqrt{\gamma_i X^i_t}dB^i_t+X^i_t\left(r_i-\sum_{j=1}^dc_{ij}X^j_t\right)dt,\quad\forall i\in\{1,\ldots,d\},
\end{equation}
where $(B^1_t,t\geq 0),\ldots,(B^d_t,t\geq 0)$ are independent
standard Brownian motions. The Brownian terms and the linear drift
terms correspond to classical Feller diffusions, and the quadratic
drift terms correspond to Lotka-Volterra interactions between
coordinates of the process. The variances per individual $\gamma_i$
are positive numbers, and the growth rates per individual $r_i$ can be
any real number, for all $1\leq i\leq d$. The parameters
$c_{ij}$ are assumed nonnegative for all $1\leq i,j\leq d$, which
corresponds to competitive Lotka-Volterra interaction. It is well
known that, in this case, there is global strong existence and
pathwise uniqueness for the SDE~\eqref{eq:SDE-LV}, and that it is
almost surely absorbed in finite time in
$\d=\mathbb{R}_+^d\setminus(0,+\infty)^d$ if $c_{ii}>0$ for all
$i\in\{1,\ldots,d\}$ (see~\cite{cattiaux-meleard-10} and
Section~\ref{sec:SDE}).

\begin{thm}
  \label{thm:main-SDE-LV}
  Consider a competitive Lotka-Volterra Feller diffusion $(X_t,t\geq 0)$ in $\mathbb{R}_+^d$ as above. Assume that $c_{ii}>0$
  for all $i\in\{1,\ldots,d\}$. Then the process has a unique quasi-stationary distribution $\nu_{QSD}$ and there exist constants
  $C,\gamma>0$ such that, for all probability measures $\mu$ on $E=(0,\infty)^d$,
  \begin{equation}
    \label{eq:conv}
    \left\|\PP_\mu(X_t\in\cdot\mid t<\tau_\d)-\nu_{QSD}\right\|_{TV}\leq C e^{-\gamma t},\quad \forall t\geq 0.
  \end{equation}
\end{thm}

This results was previously known in dimension 2 when the constants
$c_{ij}$ and $\gamma_{ij}$ satisfy $c_{12}\gamma_1=c_{21}\gamma_2$,
which implies that the process (after some transformations) is a
Kolmogorov diffusion (i.e.\ of the form $dY_t=dW_t-\nabla V(Y_t)dt$
for some Brownian motion $W$ and some $C^2$ function $V$,
see~\cite{cattiaux-meleard-10}). Our result is valid in any dimension
and has no restriction on the coefficients. One can also expect to
extend our result to cooperative cases (e.g.\ with $c_{21}<0$ and
$c_{12}<0$ as in~\cite{cattiaux-meleard-10}) by using our abstract
Lyapunov criterion with functions combining those used to prove
Theorems~\ref{thm:main-PNM-LV} and~\ref{thm:main-SDE-LV}. Another
motivations of our study comes from~\cite{Bansaye2019}, where the
coming down from infinity of Lotka-Volterra Feller diffusions is
studied. It appears that such processes may go extinct far from
compact sets for very large initial
conditions. Theorem~\ref{thm:main-SDE-LV} proves that this does not
prevent the conditioned process to come back to compact sets fast.

\medskip Our general non-linear Lyapunov criterion, Theorem~\ref{thm:Lyapunov}, is stated in Section~\ref{sec:lyap} and proved in
Section~\ref{secNproofs}. Sections~\ref{sec:PNM-multi-dim} and~\ref{sec:SDE} are devoted to the study of (extensions of) competitive
Lotka-Volterra birth and death processes and competitive Lotka-Volterra Feller diffusions and to the proofs of
Theorems~\ref{thm:main-PNM-LV} and~\ref{thm:main-SDE-LV}.

\medskip Since the publication of the first version of this preprint
on Arxiv, the criteria presented here have been applied to other
models, including diffusion models of deadlocks in distributed
algorithms~\cite{ChampagnatSchottEtAl2018} and stochastic reaction
networks~\cite{HansenWiuf2018}, with a weakened condition for birth
and death processes.

\section{A general Lyapunov criterion for uniform exponential convergence of conditional distributions}
\label{sec:lyap}

Our general framework is inspired from~\cite{Meyn-Tweedie-III}.

\subsection{Definitions and notations}
\label{sec:def-Markov-process}

We consider a c\`adl\`ag (right continuous with left limits)
time-homogeneous strong Markov process
$(\Omega,\mathcal{F},(\mathcal{F}_t)_{t\geq 0},(X_t,t\geq
0),(\P_x)_{x\in E\cup\{\d\}})$ 
with state space $E\cup\d$, which is assumed to be a metric space with
distance function $d$, equipped with its Borel $\sigma$-fields and
such that $E$ is measurable and $E\cap\d=\emptyset$.  In what follows,
$o$ is a fixed arbitrary point in $E$ and we define the open sets
\[
O_n=\{x\in E: d(x,\d)>1/(n+1)\text{ and }d(x,o)<n+1\},
\]
for all $n\geq 1$.

We assume that $E\cap\d=\emptyset$, that $\d$ is an absorbing
set for the process and we introduce
\[
\tau_\d=\inf\{t\geq 0,\ X_t\in\d\}.
\]
We assume that $\tau_\d<\infty$ a.s.\ and that the process is regularly absorbed, in the sense that
\begin{equation}
    \label{eq:def-tau_partial}
    \tau_\d:=\lim_{n\rightarrow+\infty} T_n,\text{ where }T_n:=\inf\{t\geq 0,X_t\not\in O_n\}
\end{equation}
and that,
for all $x\in E$ and $t\geq 0$, $\P_x(t<\tau_\d)>0$.

We also assume that, for any closed set $C$, the entry time in $C$ defined by $\tau_C=\inf\{t\geq 0: X_t\in C\}$, are $({\cal
  F}_t)_{t\geq 0}$-stopping times.\footnote{One can easily adapt our proofs to cases where entry times in other sets (e.g.\ open) are strong
  Markov times for the process. 
} In particular, since
$(E\cup\d)\setminus O_n$ is closed, $T_n$ and thus $\tau_\d$ are $(\mathcal{F}_t)_{t\geq 0}$-stopping times. 





We shall make use of the following weakened notion of generator for $X$, inspired from~\cite{Meyn-Tweedie-III} and which extends the
usual weak infinitesimal generator~\cite{Dynkin1965}.
\begin{dfn}
  \label{def:generator}
  We say that a measurable function $V:E\cup\d\rightarrow\R$ belongs to the domain $\mathcal{D}(L)$ of the weakened generator $L$
  of $X$ if there exists a measurable function $W:E\rightarrow\R$ such that, for all $n\in\N$, $t\geq 0$ and $x\in E$,
  \begin{equation}
    \label{eq:def-gene}
    \E_x\int_0^{t\wedge T_n}\left|W(X_s)\right|\,ds<\infty\text{ and }\E_xV(X_{t\wedge T_n})=V(x)+\E_x\left[\int_0^{t\wedge T_n} W(X_s) ds\right],
  \end{equation}
  and we define $LV=W$ on $E$. We also define $LV(x)=0$ for all $x\in\d$.
\end{dfn}

Due to the local form of the above Dynkin formula, it is much easier to check that $V$ belongs to $\mathcal{D}(L)$ than to the usual
domain of the weak infinitesimal generator. 

We also define the set of admissible functions to which our Lyapunov criterion applies. We will say that a measurable function on $E$
or $E\cup\d$ is \emph{locally bounded} if it is bounded on $O_n$ for all $n\geq 1$.
\begin{dfn}
  \label{def:bonnes-fonctions}
  We say that a couple $(V,\varphi)$ of functions $V$ and $\varphi$ measurable from $E\cup\d$ to $\R_+$ is an \emph{admissible couple
    of functions} if
  \begin{description}
  \item[\textmd{(i)}] $V$ and $\varphi$ are bounded, identically $0$ on $\d$, positive on $E$, and $1/V$ and $1/\varphi$ are locally
    bounded on $E$.
  \item[\textmd{(ii)}] We have the convergences 
    \begin{equation}
      \label{eq:phi=o(V)}
      \lim_{n\rightarrow+\infty}\inf_{x\in E\setminus O_n} \frac{V(x)}{\varphi(x)}=+\infty
    \end{equation}
    and
    \begin{equation}
      \label{eq:weak-weak-continuity-V}
      \lim_{n\rightarrow+\infty}V(X_{T_n})=0\quad\text{a.s.}
    \end{equation}
  \item[\textmd{(iii)}] $V$ and $\varphi$ belong to the domain of the weakened generator $L$ of $X$, $LV$ is bounded from above and
    $L\varphi$ is bounded from below.
  \end{description}
\end{dfn}

The main point of this definition of admissible functions is the
following result, whose technical proof is postponed to
Section~\ref{secNproofs}.

\begin{prop}
  \label{prop:Dynkin-conditionnel}
  Assume $(V,\varphi)$ is a couple of admissible functions.  Then, for
  all $x\in E$ and $t\geq 0$,
  $$
  \frac{\E_x[LV(X_s)]}{\E_x[\varphi(X_s)]}
  -\frac{\E_x[V(X_s)]}{\E_x[\varphi(X_s)]}\,\frac{\E_x[L\varphi(X_s)]}{\E_x[\varphi(X_s)]}\in L^1([0,t])
  $$
  and
  \begin{equation}
    \label{eq:Dynkin-conditionnel}
    \frac{\E_x[V(X_t)]}{\E_x[\varphi(X_t)]}=\frac{V(x)}{\varphi(x)}+\int_0^t\left\{\frac{\E_x[LV(X_s)]}{\E_x[\varphi(X_s)]}
      -\frac{\E_x[V(X_s)]}{\E_x[\varphi(X_s)]}\,\frac{\E_x[L\varphi(X_s)]}{\E_x[\varphi(X_s)]}\right\}\,ds.
  \end{equation}
\end{prop}


\subsection{A non-linear Lyapunov criterion}
\label{sec:nonlinear-Lyapunov}

In the following assumption, we need a pair of functions, while usual
\textit{drift conditions} of Foster-Lyapunov criteria only use one
(see for instance~\cite{Meyn-Tweedie-III}). Roughly speaking, the
function $V$ is used to control return time in compact sets from
neighborhood of the boundary, while the second one, $\varphi$, is used
to control the absorption rate. One of the main difficulty when
checking the following assumption, is that $V$ needs also to be
related to the absorption probability, via the
inequality~\eqref{eq:survie<V}.

\begin{hyp}
  \label{hyp:main}
  There exist constants $k_0\in\N$, $C_1,C_2,C_3,\varepsilon>0$ such that
    \begin{equation}
      \label{eq:true-Lyap-1}
      -L\varphi\leq C_1\mathbbm{1}_{O_{k_0}}\text{ and }      LV+C_2\frac{V^{1+\varepsilon}}{\varphi^\varepsilon}\leq C_3\varphi.
    \end{equation}
  and there exist constants $r_0,p_0>0$ and $\ell_0\in\N$ such that
  \begin{equation}
    \label{eq:survie<V}
    \P_x(r_0<\tau_\d)\leq p_0 V(x),\quad\forall x\in E\setminus O_{\ell_0}.
  \end{equation}
\end{hyp}

The main role of the first part in the above assumption is to bound
the derivative computed in Proposition~\ref{prop:Dynkin-conditionnel},
which will allow us to show that the quantity
$\frac{\E_x[V(X_t)]}{\E_x[\varphi(X_t)]}$ is uniformly bounded in
$x\in E$ for $t$ large enough. The second part of this assumption is
needed to check that the boundedness of
$\frac{\E_x[V(X_t)]}{\E_x[\varphi(X_t)]}$ implies that, conditionally
on $t<\tau_\d$, $X_t\in O_n$ with high probability for $n$ large. This
implies that the problem can be localized in this set $O_n$, so that,
in order to check Conditions~(A1--A2), it is enough to assume the
following local versions of (A1--A2), which are much easier to check.



\begin{hyp}
  \label{hyp:Doeblin-local}
  There exists a probability measure $\nu$ on $E$ such that, for all
  $n\geq 1$, there exist $a_n>0$ and $\theta_n>0$ satisfying
  \begin{equation*}
    \label{eq:Doeblin-local}
    \P_x(X_{\theta_n}\in\cdot)\geq a_n\nu, \quad\text{for all $x\in O_n$.}
  \end{equation*}
  In addition, for all $n\geq 0$, there exists a constant $D_n$ such that, for all
  $t\geq 0$,
  \begin{equation}
    \label{eq:Harnack-local}
    \sup_{x\in O_n}\P_x(t<\tau_\d)\leq D_n\inf_{x\in O_n}\P_x(t<\tau_\d).
  \end{equation}
\end{hyp}

The Lyapunov criterion of Assumption~\ref{hyp:main} will be useful to
check that the conditioned process comes back quickly in bounded
subsets of $E$ from any neighborhood of the boundary. However, it does
not imply this property for initial distribution in a neighborhood of
infinity. This is the purpose of the next assumption. Since
it only concerns the unconditioned process, it may be proved using
usual drift conditions or probabilistic arguments, as we illustrate in
the two examples of Sections~\ref{sec:PNM-multi-dim} and~\ref{sec:SDE}.
\begin{hyp}
  \label{hyp:moments-expo}
  For all $\lambda>0$, there exists $n\geq 1$ such that
  \begin{equation}
    \label{eq:moments-expo}
    \sup_{x\in E}\E_x(e^{\lambda(S_n\wedge\tau_\d)})<\infty.
  \end{equation}
  where $S_n=\inf\{t\geq 0: X_t\in \overline{O_n}\}$.
\end{hyp}

\medskip
We can now state our main general result. Its proof is given in Section~\ref{secNproofs}.

\begin{thm}
  \label{thm:Lyapunov}
  Assume that the process $(X_t,t\geq 0)$ is regularly absorbed and
  that there exists a couple of admissible functions $(V,\varphi)$
  satisfying Assumption~\ref{hyp:main}. Assume also that
  Assumptions~\ref{hyp:Doeblin-local} and~\ref{hyp:moments-expo} are
  satisfied. Then the process $X$ admits a unique quasi-stationary
  distribution $\nu_{QSD}$ and there exist constants $C,\gamma>0$ such
  that for all probability measure $\mu$ on $E$,
  \begin{equation}
    \label{eq:cv-expo-main-thm}
    \|\P_\mu(X_t\in\cdot\mid t<\tau_\d)-\nu_{QSD}\|_{TV}\leq C e^{-\gamma t},\quad\forall t\geq 0.
  \end{equation}
\end{thm}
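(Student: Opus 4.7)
The plan is to apply Assumption~\ref{hyp:main} along the conditioned flow $\mu_t := \P_\mu(X_t\in\cdot\mid t<\tau_\d)$ to obtain a uniform compact containment, and then to combine this with Assumptions~\ref{hyp:Doeblin-local},~\ref{hyp:moments-expo} and~\ref{hyp:Harnack-local} in order to verify the necessary and sufficient criterion of~\cite{ChampagnatVillemonais2016} for uniform exponential convergence of conditioned laws.

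The first step is a differential-inequality argument for the ratio $R_t := \mu_t(V)/\mu_t(\varphi)$. Writing $u_t(f) := \E_\mu[f(X_t)\mathbf{1}_{t<\tau_\d}]$, Definition~\ref{def:generator} applied up to $T_n$ and passed to the limit in $n$ via~\eqref{eq:weak-continuity-V} and~\eqref{eq:weak-continuity-phi} gives $\frac{d}{dt}u_t(f) = u_t(Lf)$ for $f\in\{V,\varphi\}$, so that
\begin{equation*}
  \frac{dR_t}{dt} = \frac{1}{\mu_t(\varphi)}\left[\mu_t(LV) - \mu_t(V)\frac{\mu_t(L\varphi)}{\mu_t(\varphi)}\right] \leq A - B R_t^{1+\varepsilon},
\end{equation*}
the last inequality being~\eqref{eq:Lyapunov}. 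Since $\varepsilon>0$, comparison with the autonomous ODE $\dot r = A - B r^{1+\varepsilon}$ furnishes $T_0, M>0$ depending only on $(A,B,\varepsilon)$ such that $R_t \leq M$ for every $t\geq T_0$ and every initial $\mu$; combined with~\eqref{eq:phi=o(V)} this gives a uniform $\varphi$-weighted tightness of $\mu_t$ on the exhaustion $\{O_n\}$.

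The second step converts this weighted tightness into genuine tightness of $\mu_t$ and then verifies the criterion of~\cite{ChampagnatVillemonais2016}. The survival estimate~\eqref{eq:survie<V} bounds $\P_x(r_0<\tau_\d)$ on $E\setminus O_n$ by $p_0 V(x)$, so applying the Markov property at time $t-r_0$ shows that the contribution of $E\setminus O_n$ to $\P_\mu(t<\tau_\d)$ is controlled by $p_0\,u_{t-r_0}(V\mathbf{1}_{E\setminus O_n})$, which by the previous step is an arbitrarily small fraction of $\P_\mu(t<\tau_\d)$ for $n$ and $t$ large, uniformly in $\mu$. Assumption~\ref{hyp:Doeblin-local} then supplies a Doeblin minorization of the semigroup started from $O_{k_0}$, the weak irreducibility propagates it to arbitrary $O_n$, Assumption~\ref{hyp:moments-expo} ensures that excursions outside $U_n$ are short compared to the absorption timescale, and Assumption~\ref{hyp:Harnack-local} yields the comparison $\P_\nu(t<\tau_\d)\geq c\,\P_x(t<\tau_\d)$ uniform in $x\in E$ and $t\geq 0$. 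Collecting these ingredients produces a probability measure $\nu$ on $E$, $t_0>0$ and $c_1,c_2>0$ verifying the conditions of~\cite{ChampagnatVillemonais2016}, from which~\eqref{eq:cv-expo-main-thm} and uniqueness of $\nu_{QSD}$ follow at once.

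The principal obstacle is precisely this tightness upgrade: the nonlinear Lyapunov inequality only controls $\mu_t$ in a $\varphi$-weighted sense, whereas the Doeblin and Harnack assumptions are phrased in terms of the unweighted conditional law. Reconciling the two is the role of~\eqref{eq:survie<V}, tailored so that the contribution of ``far'' states to the survival probability is dominated by $\mu_t(V)$, together with a careful use of Assumption~\ref{hyp:Harnack-local} to lower bound $\P_\mu(t<\tau_\d)$ by the probability of reaching the small set $O_{k_0}$. This is also where the unboundedness of the absorption rate emphasised below Theorem~\ref{thm:main-PNM-LV} is absorbed: the conditioned dynamics are forced back towards $O_{k_0}$ by the Lyapunov bound, even though no uniform bound on the absorption rate is available.
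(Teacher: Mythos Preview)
Your proposal is correct and follows essentially the same route as the paper: derive via Proposition~\ref{prop:Dynkin-conditionnel} the differential inequality $\frac{d}{dt}R_t\leq A-BR_t^{1+\varepsilon}$ for $R_t=\E_\mu V(X_t)/\E_\mu\varphi(X_t)$, exploit it together with~\eqref{eq:survie<V} and Assumption~\ref{hyp:Doeblin-local} (via Lemma~\ref{lem:hyp-Doeblin-better}) to obtain condition~(A1) of~\cite{ChampagnatVillemonais2016}, and combine Assumptions~\ref{hyp:moments-expo} and~\ref{hyp:Harnack-local} with the strong Markov property at $\tau_{U_m}$ to obtain~(A2). The only cosmetic differences are that the paper works with Dirac initial distributions (since (A1)--(A2) are pointwise) and extracts a single time $u_x\in[0,t_0]$ with $R_{u_x}<a$ rather than your eventual uniform bound $R_t\leq M$ for $t\geq T_0$; both yield the needed comparison $\E_x[V(X_{u_x})\mathbbm{1}_{O_{n_0}}]\gtrsim \E_x[V(X_{u_x})\mathbbm{1}_{E\setminus O_{n_0}}]$ from which~(A1) is assembled.
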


\section{Proof of the results of Section~\ref{sec:lyap}}
\label{secNproofs}

We first give the proof of Proposition~\ref{prop:Dynkin-conditionnel} in Subsection~\ref{sec:dynkin-conditionnel} and the proof of
Theorem~\ref{thm:Lyapunov} in Subsection~\ref{sec:pf-Lyapunov}. The proofs of two technical lemmas are given in
Subsections~\ref{sec:pf-lem1} and~\ref{sec:pf-lem2}.

\subsection{Proof of Proposition~\ref{prop:Dynkin-conditionnel}}
\label{sec:dynkin-conditionnel}

  In what follows, we use the classical definition of the integral of a signed function $f$
  with respect to a positive measure $\mu$ by $\mu(f_+)-\mu(f_-)\in[-\infty,+\infty]$ (where $f_+$
  and $f_-$ denote respectively the positive and negative parts of
  $f$), which is well defined as soon as at least one of the two terms is
  finite. Classical results (as Lebesgue's theorem, Fatou's lemma and
  Fubini's theorem) still hold in this case.
  
  Using the Definition~\ref{def:generator} of the weakened infinitesimal generator, we have for all $n\geq 1$
  \begin{equation}
    \label{eq:Dynk-cond-1}
    \E_x V(X_{t\wedge T_n})=V(x)+\E_x\int_0^{t\wedge T_n} LV(X_s)ds=V(x)+\E_x\int_0^t LV(X_s)\mathbbm{1}_{s<T_n}ds.
  \end{equation}
  Note that
  \begin{align*}
    \E_x V(X_{t\wedge T_n})=\E_x \left(\11_{t<T_n} V(X_t)\right)+\E_x\left(\11_{T_n\leq t} V(X_{T_n})\right).
  \end{align*}
  Using~\eqref{eq:def-tau_partial}, the Assumption~\eqref{eq:weak-weak-continuity-V} and that $V(X_{T_n})$ is uniformly bounded,
  Lebesgue's theorem implies that
  \begin{align}
    \label{eqNLebesgue1}
    \lim_{n\rightarrow+\infty} \E_x V(X_{t\wedge T_n})=\E_x \left(\11_{t<\tau_\d} V(X_t)\right)=\E_x(V(X_t)).
  \end{align}
  Now Fatou's lemma applied to the right-hand side of~\eqref{eq:Dynk-cond-1} (using that $LV$ is
  bounded from above) gives
  \begin{equation*}
    \EE_x V(X_t)\leq V(x)+\EE_x\left[\int_0^{t\wedge\tau_\d}LV(X_s)\,ds\right]= V(x)+\int_0^t\EE_x\left[LV(X_s)\right]\,ds,
  \end{equation*}
  where we have used Fubini's theorem for the last inequality.  Since
  $V\geq 0$ and $LV$ is bounded from above, we deduce that
  $\EE_x LV(X_s)\in L^1([0,t])$ 
  and that $V(X_t)\in L^1(\Omega)$. Therefore, we can actually apply
  Lebesgue's Theorem to the right-hand side of~\eqref{eq:Dynk-cond-1}
  and hence
  \begin{equation*}
    \EE_x V(X_t)=V(x)+\int_0^t\EE_x\left[LV(X_s)\right]\,ds.
  \end{equation*}
  The same argument applies to $-\varphi$ (note that~\eqref{eq:phi=o(V)} and~\eqref{eq:weak-weak-continuity-V} imply that $\lim
  \varphi(X_{T_n})=0$ a.s.):
  \begin{equation}
    \label{eq:Dynk-cond-3}
    \EE_x \varphi(X_t)=\varphi(x)+\int_0^t\EE_x\left[L\varphi(X_s)\right]\,ds.
  \end{equation}
  Therefore $\EE_x V(X_t)$ and $\EE_x \varphi(X_t)$ are continuous with respect to $t$ and (cf.\ e.g.~\cite[Lem.~VIII.2]{brezis}), for all $T>0$,
  $t\mapsto (\EE_x V(X_t),\EE_x \varphi(X_t))$ 
  belongs to the Sobolev space $W^{1,1}([0,T],\R^2)$ (the set of
  functions from $[0,T]$ to $\mathbb{R}^2$ in $L^1$ admitting a
  derivative in the sense of distributions in $L^1$).

  In particular, since $\P_x(t<\tau_\d)>0$ and hence
  $\E_x\varphi(X_t)>0$ for all $t\in[0,T]$, we deduce from the
  continuity of $t\mapsto \EE_x \varphi(X_t)$ that
  $\inf_{t\in [0,T]}\E_x\varphi(X_t)>0$. Therefore, we deduce from
  standard properties of $W^{1,1}$ functions~\cite[Cor.~VIII.9 and
  Cor.~VIII.10]{brezis} that
  $t\mapsto \EE_x V(X_t)/\EE_x \varphi(X_t)$ also belongs to $W^{1,1}([0,T],\mathbb{R})$
  and admits as derivative
  $$
  t\mapsto\frac{\EE_x LV(X_t)}{\EE_x \varphi(X_t)}-\frac{\EE_x V(X_t)\,\EE_x L\varphi(X_t)}{[\EE_x\varphi(X_t)]^2}\in L^1([0,T]).
  $$
  Hence we have proved~\eqref{eq:Dynkin-conditionnel}.

  \subsection{Proof of Theorem~\ref{thm:Lyapunov}}
\label{sec:pf-Lyapunov}

  The proof is based on two lemmas. The first one combines Proposition~\ref{prop:Dynkin-conditionnel} and
  Assumption~\eqref{eq:true-Lyap-1} to give uniform (in $x$) controls on $\frac{\E_x V(X_t)}{\E_x\varphi(X_t)}$ for $t$ large enough.
  Its proof is given in Subsection~\ref{sec:pf-lem1}.

  \begin{lem}
    \label{propNtrueLyapunov-1}
    There exists two positive constants $A$ and $B$ such that, for all $x\in E$ and all $s\geq 0$,
    \begin{align}
      \label{eq:lem1a}
      \frac{\E_x(LV(X_s))}{\E_x(\varphi(X_s))}-\frac{\E_x(V(X_s))}{\E_x(\varphi(X_s))}\frac{\E_x(L\varphi(X_s))}{\E_x(\varphi(X_s))}\leq A-B\left(\frac{\E_x(V(X_s))}{\E_x(\varphi(X_s))}\right)^{1+\varepsilon}.
    \end{align}
    In particular, there exists $t_0>0$ such that, for all $x\in E$
    and all $t\geq t_0$,
    \begin{equation}
      \label{eq:lem1b}
      \frac{\E_x V(X_t)}{\E_x\varphi(X_t)}\leq \left(\frac{2A}{B}\right)^{1/(1+\varepsilon)}+2At_0.
    \end{equation}
  \end{lem}

  The second lemma makes use of Assumption~\eqref{eq:survie<V} to deduce from Lemma~\ref{propNtrueLyapunov-1} the following
  inequality. Its proof is given in Subsection~\ref{sec:pf-lem2}

  \begin{lem}
    \label{propNtrueLyapunov-2}
    There exist $n_0\geq 1$ and a constant $D>0$ such that, for all
    $t\geq t_0$ and all $x\in E$,
    \begin{align}
      \label{eqNlastineq}
      \E_x [V(X_{t})]\leq D\,\E_x [V(X_{t})\11_{X_{t}\in O_{n_0}}].
    \end{align} 
  \end{lem}

  We prove Theorem~\ref{thm:Lyapunov} by checking that the two conditions (A1) and (A2) in the introduction are satisfied
  (cf.~\cite{ChampagnatVillemonais2016}). 
  \bigskip

  \noindent\textit{Step 1: Proof of~(A1).}\\
  We first remark that there exists $m_0\geq 0$ such that
  $\nu(O_{m_0})>0$ (where $\nu$ and $\theta_n$ below are from
  Assumption~\ref{hyp:Doeblin-local}) and hence such that, for all
  $n\geq 1$, all $x\in O_n$ and all $k\geq 1$,
  \begin{align}
    \label{eqNiterative}
    \P_x(X_{\theta_n+k\theta_{m_0}}\in\cdot)\geq a_n\,\nu(O_{m_0})\inf_{y\in
      O_{m_0}} \P_y(X_{k\theta_{m_0}}\in \cdot)\geq a_n a_{m_0}^{k}\nu(O_{m_0})^k \nu(\cdot),
  \end{align}
  where we used Markov's property and an induction procedure over $k$. Hence we can assume without loss of generality that, for all $n\geq 1$, $\theta_n\geq r_0$ (where $r_0$ is from Assumption~\ref{hyp:main}).

  As a consequence, Assumption~\eqref{eq:survie<V}, Inequality~\eqref{eqNlastineq} and Markov's property entail that,
  for all $t\geq t_0$,
\begin{align*}
  \P_x(t+\theta_{n_0}<\tau_\d)&\leq  \P_x(t+r_0<\tau_\d) \leq D\,p_0\,\E_x [V(X_{t})\11_{X_{t}\in O_{n_0}}]\\
                  &\leq D\,p_0\,\sup_{O_{n_0}} V \,\P_x(X_{t}\in O_{n_0}).
\end{align*}
On the other hand,
\[
  \P_x(X_{t+\theta_{n_0}}\in\cdot)\geq \P_x(X_t\in O_{n_0})\inf_{y\in O_{n_0}} \P_y(X_{\theta_{n_0}}\in\cdot)\geq \P_x(X_t\in O_{n_0})\,a_{n_0}\,\nu(\cdot).
\]
Setting $t'_0=t_0+\theta_{n_0}$, the two above equations imply that,
for all $x\in E$,
\begin{align*}
  \P_x(X_{t'_0}\in\cdot\mid t'_0<\tau_\d)\geq  \frac{a_{n_0}}{Dp_0\sup_{O_{n_0}} V}\,\nu(\cdot),
\end{align*}
and hence that Assumption~(A1) holds true.

\bigskip
\noindent\textit{Step 2: Proof of~(A2).}\\
Using~\eqref{eqNiterative},  for all $n\geq m_0$, all $x\in O_n$ and all $k\geq 1$, we obtain that, for all $t\in [\theta_n+k\theta_{m_0},\theta_n+(k+1)\theta_{m_0})$,
\begin{align*}
  \P_x(t+s<\tau_\d)&\geq \P_x(X_{\theta_n+k\theta_{m_0}}\in O_{m_0})\inf_{y\in O_{m_0}} \P_y(s<\tau_\d)\\
                   &\geq a_n a_{m_0}^k\nu(O_{m_0})^{k+1}\,\inf_{y\in O_{n}} \P_y(s<\tau_\d).
\end{align*}
Setting $\lambda:=-\ln(a_{m_0}\nu(O_{m_0}))/\theta_{m_0}$ and using inequality~\eqref{eq:Harnack-local} of Assumption~\ref{hyp:Doeblin-local}, we deduce that, for all $n\geq 1$ and all $s,t\geq 0$,
\begin{equation}
  \label{eq:minoration-technique}
  \inf_{x\in O_n}\P_x(t+s <\tau_\d)\geq\frac{a_n \nu(O_{m_0})}{D_n}\exp(-\lambda t)\,\sup_{x\in O_{n}}\P_x(s<\tau_\d).
\end{equation}

Now, we apply Assumption~\ref{hyp:moments-expo} for $\lambda$ as defined above: there exists $n\geq m_0$ such
that
\begin{equation}
  \label{eq:moment-expo}
  M:=\sup_{x\in E}\E_x[\exp(\lambda(S_{n-1}\wedge\tau_\d))]<\infty.
\end{equation}
Note that, since $X_t$ is c\`adl\`ag, $X_{S_{n-1}}\in \overline{O}_{n-1}\subset O_{n}$ on the event $\{S_{n-1}<\infty\}$. Hence,
using~\eqref{eq:moment-expo} and the strong Markov property at time
$S_{n-1}$ (which is a stopping time since it is the entry time in a closed set), for all $x\in E$,
\begin{align*}
  \P_x(t<\tau_\d)  & =\P_x(t<S_{n-1}\wedge\tau_\d)+\P_x(S_{n-1}\leq t<\tau_\d)\\
  &\leq Me^{-\lambda t}+\int_0^t \sup_{y\in O_{n}}\P_y(t-s<\tau_\d)\P_x(S_{n-1}\in ds).
\end{align*}
Note that, if $S_{n-1}<\infty$, then $S_{n-1}<\tau_\d$ and $S_{n-1}=S_n\wedge \tau_\d)$. Thus, for all $s\leq t$,
$\P_y(S_{n-1}\in ds)=\P_y(S_{n-1}\wedge\tau_\d\in ds,\,S_{n-1}<\infty)\leq \P_y(S_{n-1}\wedge\tau_\d\in
ds)$. Hence, using~\eqref{eq:minoration-technique} twice, we have for all $x\in E$
\begin{align*}
  \P_x(t<\tau_\d) &\leq Me^{-\lambda t}+\int_0^t \sup_{y\in O_{n}}\P_y(t-s<\tau_\d)\P_x(S_{n-1}\wedge\tau_\d\in ds)\\
  &\leq \frac{a_{n}}{D_{n} \nu(O_{m_0})} \left[M \inf_{y\in O_{n}}\P_y(t<\tau_\d) \right. \\ & \qquad \qquad\qquad\left.+\inf_{y\in O_{n}}\P_y(t<\tau_\d)\int_0^t e^{\lambda
      s}\P_x(S_{n-1}\wedge\tau_\d\in ds) \right] \\
  &\leq  \frac{2 a_{n} M}{D_{n} \nu(O_{m_0})} \inf_{y\in O_{n}}\,\P_y(t<\tau_\d)\\
  &\leq  \frac{2 a_{n} M}{D_{n} \nu(O_{m_0})\nu(O_{n})}\P_\nu(t<\tau_\d). 
\end{align*}
Since $O_n\supset O_{m_0}$, we have $\nu(O_n)\geq \nu(O_{m_0})>0$, so the last inequality implies (A2).

  \subsection{Proof of Lemma~\ref{propNtrueLyapunov-1}}
  \label{sec:pf-lem1}

  \noindent\textit{Step 1: Proof of~\eqref{eq:lem1a}.}\\
    Fix $x\in E$ and
  $s\geq 0$. On the one hand, it follows from~\eqref{eq:true-Lyap-1}
  that
  \begin{align*}
    -\E_x(L\varphi(X_s)) & \leq\frac{C_1}{\inf_{O_{k_0}}\varphi}\,\E_x(\varphi(X_s)).
  \end{align*}
  and hence
  \begin{align}
    \label{eqNfirstinequality}
    -\frac{\E_x(L\varphi(X_s))}{\E_x(\varphi(X_s))}\E_x(V(X_s))
    \leq \frac{C_1}{\inf_{O_{k_0}}\varphi}\E_x(V(X_s))
  \end{align}
  On the other hand, we deduce from~\eqref{eq:true-Lyap-1} that
  \begin{align}
    \nonumber
    \E_x(LV(X_s))&\leq C_3\E_x(\varphi(X_s))-C_2\E_x\left(\frac{V(X_s)^{1+\varepsilon}}{\varphi(X_s)^\varepsilon}\right)\\
                 &\label{eqNsecondinequality}
                   \leq C_3\E_x(\varphi(X_s))-\frac{C_2}{2}\E_x\left(\frac{V(X_s)^{1+\varepsilon}}{\varphi(X_s)^\varepsilon}\right)-\frac{C_2}{2}\frac{\E_x(V(X_s))^{1+\varepsilon}}{\E_x\left(\varphi(X_s)\right)^\varepsilon}.
  \end{align}
  where we used H\"older's inequality to deduce that
  \begin{align*}
    \E_x(V(X_s))^{1+\varepsilon}\leq \E_x\left(\frac{V(X_s)^{1+\varepsilon}}{\varphi(X_s)^\varepsilon}\right)\,\E_x\left(\varphi(X_s)\right)^\varepsilon.
  \end{align*}
  Now, because of Assumption~\eqref{eq:phi=o(V)}, there exists $m$ large enough such that, for all $y\in E\setminus O_m$,
  \[
    \frac{C_2V^\varepsilon(y)}{2\varphi^\varepsilon(y)}\geq \frac{C_1}{\inf_{O_{k_0}}\varphi}.
  \]
  Therefore, for such a value of $m$,
  \begin{multline*}
    \frac{C_2}{2}\E_x\left(\frac{V(X_s)^{1+\varepsilon}}{\varphi(X_s)^\varepsilon}\right) \\
    \begin{aligned}
      &  =\frac{C_2}{2}\E_x\left(\frac{V(X_s)^{1+\varepsilon}}{\varphi(X_s)^\varepsilon}\11_{X_s\in
        O_m}\right)+\frac{C_2}{2}\E_x\left(\frac{V(X_s)^{1+\varepsilon}}{\varphi(X_s)^\varepsilon}\11_{X_s\in E\setminus O_m}\right)
    \\ & \geq \frac{C_1}{\inf_{O_{k_0}}\varphi}\,\E_x \left(V(X_s) \11_{X_s\in E\setminus O_m}\right) \\
    & \geq \frac{C_1}{\inf_{O_{k_0}}\varphi}\,\E_x V(X_s)-\frac{C_1\sup_{O_m}V}{\inf_{O_{k_0}}\varphi\,\inf_{O_m}\varphi}\,\E_x \varphi(X_s).
    \end{aligned}
   \end{multline*}
Finally, we obtain from the last
inequality,~\eqref{eqNfirstinequality} and~\eqref{eqNsecondinequality}
that there exists two positive constants $A,B>0$ such that
\begin{align*}
  \frac{\E_x(LV(X_s))}{\E_x(\varphi(X_s))}-\frac{\E_x(V(X_s))}{\E_x(\varphi(X_s))}\frac{\E_x(L\varphi(X_s))}{\E_x(\varphi(X_s))}\leq A-B\left(\frac{\E_x(V(X_s))}{\E_x(\varphi(X_s))}\right)^{1+\varepsilon}
\end{align*}

\bigskip

  \noindent\textit{Step 2: Proof of~\eqref{eq:lem1b}.}\\
  We define $a=(2A/B)^{1/(1+\varepsilon)}$ and
  $t_0=\frac{4}{\varepsilon B a^\varepsilon}$.
  Propositions~\ref{prop:Dynkin-conditionnel} and~\eqref{eq:lem1a}
  imply that, for all $t\geq 0$ and all $x\in E$,
  \begin{align}
    \label{eqNthirdinequality}
    \frac{\E_x V(X_t)}{\E_x\varphi(X_t)} & \leq
                                         \frac{V(x)}{\varphi(x)}+At-B\int_0^t \left(\frac{\E_x V(X_s)}{\E_x\varphi(X_s)}\right)^{1+\varepsilon}\,ds.
  \end{align}
  Since $\varepsilon>0$, this implies that, for all $x\in E$, there
  exists $u_x\in[0,t_0]$ such that
  $\frac{\E_x V(X_{u_x})}{\E_x\varphi(X_{u_x})}<a$ for any $x\in
  E$. We prove this by contradiction: assume on the contrary that for all $s\in[0,t_0]$,
  $\frac{\E_x V(X_s)}{\E_x\varphi(X_s)}\geq a$. Then, for all
  $t\in[0,t_0]$,
\[
\frac{\E_x V(X_{t})}{\E_x\varphi(X_t)}\leq \frac{V(x)}{\varphi(x)}-\frac{B}{2}\int_0^{t}\left(\frac{\E_x
    V(X_s)}{\E_x\varphi(X_s)}\right)^{1+\varepsilon}\,ds.
\]
Integrating this differential inequality up to time $t_0$ entails
\begin{align*}
  \frac{\E_x V(X_{t_0})}{\E_x\varphi(X_{t_0})} & \leq\left[\left(\frac{\varphi(x)}{V(x)}\right)^\varepsilon
    +\frac{\varepsilon B t_0}{2}\right]^{-1/\varepsilon} 
  \leq\left(\frac{2}{\varepsilon B t_0}\right)^{1/\varepsilon}=\frac{a}{2},
\end{align*}
which gives a contradiction.

Hence, using Proposition~\ref{prop:Dynkin-conditionnel} and~\eqref{eq:lem1a} again, we deduce that, for all
$t\in [t_0,2t_0]$ and all $x\in E$,
\begin{align*}
  \frac{\E_x(V(X_t))}{\E_x(\varphi(X_t))} & \leq\frac{\E_x V(X_{u_x})}{\E_x\varphi(X_{u_x})}+A(t-u_x)-B\int_{u_x}^t \left(\frac{\E_x
      V(X_s)}{\E_x\varphi(X_s)}\right)^{1+\varepsilon}\,ds \\ & \leq a+A(t-u_x)\leq a+A \,2\,t_0.  
\end{align*}
Using the same argument repetitively between time $kt_0$ (instead of
$0$) and $(k+2)t_0$ (instead of $2t_0$) gives the result for all time $t\geq t_0$.

\subsection{Proof of Lemma~\ref{propNtrueLyapunov-2}}
\label{sec:pf-lem2}

  Set
  $a'=\left(\frac{2A}{B}\right)^{1/(1+\varepsilon)}+3At_0$. Equation~\eqref{eq:phi=o(V)}
  allows us to fix $n_0\geq \ell_0$ such that
\[
\inf_{y\in E\setminus O_{n_0}}\frac{V(y)}{\varphi(y)}\geq 2a'.
\]
Lemma~\ref{propNtrueLyapunov-1} implies that, for all $t\geq t_0$,
\begin{align*}
  a' & >\frac{\E_x V(X_{t})}{\E_x\varphi(X_{t})}\\
  & \geq\frac{\E_x [V(X_{t})\11_{X_{t}\in E\setminus O_{n_0}}]+\E_x
    [V(X_{t})\11_{X_{t}\in O_{n_0}}]}{\sup_{y\in E\setminus O_{n_0}}\frac{\varphi(y)}{V(y)}\E_x [V(X_{t})\11_{X_{t}\in E\setminus
      O_{n_0} }]+\sup_{y\in E}\frac{\varphi(y)}{V(y)}\E_x [V(X_{t})\11_{X_{t}\in O_{n_0}}]} \\
  & \geq\frac{\E_x [V(X_{t})\11_{X_{t}\in E\setminus O_{n_0}}]+\E_x [V(X_{t})\11_{X_{t}\in O_{n_0}}]}{\frac{1}{2a'}\E_x
    [V(X_{t})\11_{X_{t}\in E\setminus O_{n_0} }]+\sup_{y\in E}\frac{\varphi(y)}{V(y)}\E_x [V(X_{t})\11_{X_{t}\in O_{n_0}}]}.
\end{align*}
Therefore,
\begin{align*}
  \left(a' \sup_{y\in E}\frac{\varphi(y)}{V(y)}-1\right)\,\E_x [V(X_{t})\11_{X_{t}\in O_{n_0}}]\geq\frac{1}{2}\E_x [V(X_{t})\11_{X_{t}\in E\setminus O_{n_0}}].
\end{align*}
Since $a'>\E_x V(X_{t})/\E_x\varphi(X_{t})\geq 1/\sup_{y\in E} (\varphi(y)/V(y))$,
we deduce that
there exists a constant $D>0$ such that
\begin{align*}
  \E_x [V(X_{t})]\leq D\,\E_x [V(X_{t})\11_{X_{t}\in O_{n_0}}].
\end{align*}

\section{Application to multidimensional birth and death processes absorbed when one of the coordinates hits 0}
\label{sec:PNM-multi-dim}

We consider general multitype birth and death processes in continuous time, taking values in $\ZZ_+^d$ for some $d\geq 2$. Let
$(X_t,t\geq 0)$ be a Markov process on $\ZZ_+^d$ with transition rates
$$
\text{from }n=(n_1,\ldots,n_d)\text{ to }
\begin{cases}
  n+e_j & \text{with rate }n_j b_j(n),\\ 
  n-e_j & \text{with rate }n_j d_j(n) 
\end{cases}
$$
for all $1\leq j\leq d$, with $e_j=(0,\ldots,0,1,0,\ldots,0)$, where
the nonzero coordinate is the $j$-th one,
$b(n)=(b_1(n),\ldots,b_d(n))$ and $d(n)=(d_1(n),\ldots,d_d(n))$ are
functions from $\ZZ_+^d$ to $(0,+\infty)^d$.  This model represents a
density-dependent population dynamics with $d$ types of individuals
(say $d$ species), where $b_i(n)$ (resp.\ $d_i(n)$) represents the
reproduction rate (resp.\ death rate) per individuals of species $i$
when the population is in state $n$.

Note that the forms of the birth and death rates imply that, once a
coordinate $X^j_t$ of the process hits 0, it remains equal to 0.  This
corresponds to the extinction of the population of type $j$. Hence,
the set $\d:=\ZZ_+^d\setminus\NN^d$ is absorbing for the process $X$.

We define for all $k\geq 1$
\begin{align*}
  \bar{d}(k) & =\sup_{n\in\NN^d,\ |n|=k} |n|\sum_{i=1}^d \mathbbm{1}_{n_i= 1}d_i(n), \\
  \text{and}\quad \underline{d}(k) & =\inf_{n\in\NN^d,\ |n|=k}\sum_{i=1}^d n_i\left[\mathbbm{1}_{n_i\neq 1} d_i(n)-b_i(n)\right],
\end{align*}
where $|n|:=n_1+\ldots+n_d$. We shall assume
\begin{hyp}
 \label{hyp:PNM}
  There exists $\eta>0$ small enough such that, for all $k\in\NN$ large enough,
  \begin{equation}
  \label{eq:hyp-PNM-1}
\underline{d}(k)\geq \eta \bar{d}(k),
  \end{equation}
  and
  \begin{equation}
  \label{eq:hyp-PNM-2}
    \frac{\underline{d}(k)}{k^{1+\eta}}\xrightarrow[k\rightarrow+\infty]{} +\infty.  
  \end{equation}
\end{hyp}

Note that, since the set $O_n$ is finite for all $n$, it is standard to check that any function $f:\ZZ_+^d\rightarrow\RR$ is in the
domain of the weakened infinitesimal generator of $X$ and, for all $n\in\NN^d$,
\begin{equation*}
  Lf(n)=\sum_{j=1}^d[f(n+e_j)-f(n)]n_j b_j(n) +\sum_{j=1}^d[f(n-e_j)-f(n)]n_jd_j(n). 
\end{equation*}
Under Assumption~\eqref{eq:hyp-PNM-2}, setting
$W(n)=|n|$, we have
\begin{align*}
  \frac{LW(n)}{W(n)^{1+\eta}}=-\frac{\sum_{j=1}^d n_j(d_j(n)-b_j(n))}{W(n)^{1+\eta}}\leq-\frac{\underline{d}(|n|)}{|n|^{1+\eta}}\rightarrow -\infty.
\end{align*}
This classically entails that 
\begin{equation}
  \label{eq:foster-lyap}
  \sup_{n\in\N^d}\E_n(|X_1|)<+\infty.  
\end{equation}
(The argument is very similar to the one used for
Lemma~\ref{propNtrueLyapunov-1}.) In particular, the process is
non-explosive and $\tau_\d$ is finite almost surely.  Therefore, the
process $X$ is regularly absorbed, as defined in
Section~\ref{sec:def-Markov-process}.  We
can now state the main result of the section.

\begin{thm}
  \label{thm:PNM-multi-dim}
  Under Assumption~\ref{hyp:PNM}, the multi-dimensional competitive birth and death process $(X_t,t\geq 0)$ absorbed when one of its
  coordinates hits 0 admits a unique quasi-stationary distribution $\nu_{QSD}$ and there exist constants $C,\gamma>0$ such that, for
  all probability measure $\mu$ on $\NN^d$,
  \begin{equation*}
    \|\P_\mu(X_t\in\cdot\mid t<\tau_\d)-\nu_{QSD}\|_{TV}\leq C e^{-\gamma t},\quad\forall t\geq 0.
  \end{equation*}
\end{thm}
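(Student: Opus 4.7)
The plan is to apply Corollary~\ref{cor:true-Lyap}. The heart of the argument is the construction of an admissible couple $(V,\varphi)$ satisfying conditions~(a), (b) of Proposition~\ref{prop:true-Lyap} and the absorption bound~\eqref{eq:survie<V}; the other Assumptions~\ref{hyp:Doeblin-local}--\ref{hyp:Harnack-local} should follow from standard arguments for irreducible countable-state Markov chains with strong drift. Guided by the scaling $\underline{d}(k)\gg k^{1+\eta}$ of Assumption~\ref{hyp:PNM}, I would set
\[
\varphi(n)=\frac{\mathbbm{1}_{n\in\NN^d}}{|n|^\delta},\qquad V(n)=\Bigl(1-\frac{1}{|n|^\beta}\Bigr)\mathbbm{1}_{n\in\NN^d},
\]
for parameters chosen with $0<\beta<\eta$, $\delta>1/\eta$ and $\beta+\delta\varepsilon<\eta$ (a non-empty range, since $\eta>0$). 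Both functions are bounded, vanish on $\d$, are positive on $E=\NN^d$ (because $d\geq 2$ implies $|n|\geq 2$ there), and $V/\varphi=|n|^\delta(1-|n|^{-\beta})$ is bounded below on $E$ and tends to $+\infty$ out of $O_m=\{|n|\leq m\}\cap\NN^d$. Admissibility condition~\eqref{eq:weak-weak-continuity-V} follows from non-explosion (guaranteed by~\eqref{eq:foster-lyap}): the process performs only finitely many jumps before $\tau_\d$ a.s., so $X_{T_n}\in\d$ and hence $V(X_{T_n})=0$ eventually.

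Writing $A=\sum_j n_j b_j(n)$, $B=\sum_{j:n_j\geq 2}n_j d_j(n)$ and $C=\sum_{j:n_j=1}d_j(n)$, one has $B-A\geq\underline{d}(|n|)$ by the very definition of $\underline{d}$, and $C\leq \bar d(|n|)/|n|\leq\underline{d}(|n|)/(\eta|n|)$ by~\eqref{eq:hyp-PNM-1}. Expanding the generator on $\NN^d$ for $|n|$ large yields
\[
L\varphi(n)\geq\Bigl(\delta-\frac{1}{\eta}\Bigr)\frac{\underline{d}(|n|)}{|n|^{\delta+1}}\bigl(1+o(1)\bigr),\qquad LV(n)\leq -\beta\frac{\underline{d}(|n|)}{|n|^{\beta+1}}\bigl(1+o(1)\bigr).
\]
With $\delta>1/\eta$, the first inequality makes $L\varphi\geq 0$ outside some $O_N$, giving condition~(a). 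For (b), since $V^{1+\varepsilon}/\varphi^\varepsilon\sim|n|^{\delta\varepsilon}$ and $\varphi\sim|n|^{-\delta}$, the inequality $\beta+\delta\varepsilon<\eta$ combined with~\eqref{eq:hyp-PNM-2} makes the negative term $-\beta\underline{d}(|n|)/|n|^{\beta+1}$ dominate $C'|n|^{\delta\varepsilon}$ for $|n|$ large, so the left-hand side of (b) is eventually $\leq 0$; on the finite complement $O_N$, the inequality holds by adjusting $C''$ using $\inf_{O_N}\varphi>0$. The survival bound~\eqref{eq:survie<V} is trivial: $V\to 1$ out of compacts, so $\PP_x(r_0<\tau_\d)\leq 1\leq 2V(x)$ outside a sufficiently large $O_N$.

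For the three remaining assumptions (implicitly assuming irreducibility on $\NN^d$), Assumption~\ref{hyp:Doeblin-local} on the finite set $O_{k_0}$ reduces to picking a reference state $n^\ast\in O_{k_0}$ and using irreducibility plus finiteness to get $\inf_{x\in O_{k_0}}\PP_x(X_s=n^\ast)>0$ for $s$ in a suitable interval, yielding the minorization with $\nu=\delta_{n^\ast}$; the weak irreducibility from $O_n$ to $O_{k_0}$ follows from the strong drift $LW(n)\leq -\underline{d}(|n|)$ with $W(n)=|n|$, which forces the chain to return to low levels with uniform positive probability. The same drift gives Assumption~\ref{hyp:moments-expo} via standard Foster--Lyapunov exponential-moment estimates: $\tau_{U_n}$ is controlled by a sum of exponential jump times with rates $\geq\underline{d}(k)$, whose inverses are summable thanks to $\underline{d}(k)\gg k^{1+\eta}$, producing uniform exponential moments. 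Assumption~\ref{hyp:Harnack-local} is routine: for $x,y\in O_n$ (finite), pick $s_0>0$ with $\alpha_{xy}:=\PP_y(X_{s_0}=x)>0$; the Markov property and monotonicity of $s\mapsto\PP_y(s<\tau_\d)$ yield $\PP_x(t<\tau_\d)\leq\alpha_{xy}^{-1}\PP_y(t<\tau_\d)$.

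The main obstacle is the simultaneous calibration of the three exponents $\beta,\delta,\varepsilon$: the constraints $\delta>1/\eta$ (from~(a)) and $\beta+\delta\varepsilon<\eta$ (from~(b)) must both fit into the room provided by~\eqref{eq:hyp-PNM-2}. Positivity of $\eta$ is precisely what opens a non-empty window for this parameter squeeze, so Assumption~\ref{hyp:PNM} enters the proof essentially only through it.
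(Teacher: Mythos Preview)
Your proof is correct and follows essentially the same route as the paper: both apply Corollary~\ref{cor:true-Lyap} with Lyapunov functions depending only on $|n|$ through power laws, and both reduce Assumptions~\ref{hyp:Doeblin-local}--\ref{hyp:Harnack-local} to standard facts for irreducible finite-range chains with strong inward drift. The only cosmetic difference is that the paper writes $V(n)=\sum_{k\leq |n|}k^{-\alpha}$ and $\varphi(n)=\sum_{k>|n|}k^{-\beta}$ as truncated zeta series (with $\alpha=1+\eta/2$ and $\beta-1>1/\eta$), whereas you use the asymptotically equivalent closed forms $1-|n|^{-\beta}$ and $|n|^{-\delta}$; the resulting parameter constraints ($\delta>1/\eta$, $\beta+\delta\varepsilon<\eta$) coincide, and your MVT-based estimates for the discrete increments are in fact slightly cleaner than the paper's integral comparisons.
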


As will appear in the proof below, to check our conditions, it is sufficient to take functions $V$ and $\varphi$ of the form
$f(|n|)\mathbbm{1}_{\NN^d}(n)$ for some $f:\NN\rightarrow\RR_+$. More precisely, the first part of Condition~\eqref{eq:true-Lyap-1}
can be checked for $\varphi(n)=f(|n|)\mathbbm{1}_{\NN^d}(n)$ with decreasing $f$ and the second part for
$V(n)=g(|n|)\mathbbm{1}_{\NN^d}(n)$ with increasing $g$, to take advantage of the drift of the birth and death process towards 0 for
large $|n|$. However, let us emphasize that, although the choice $\varphi(n)=\mathbbm{1}_{\NN^d}$ (i.e.\ $f\equiv 1$) is natural, it
cannot satisfy the first part of Condition~\eqref{eq:true-Lyap-1}, since in this case
$-L\varphi(n)=\sum_{i=1}^d\mathbbm{1}_{n_i=1}d_i(n)$ (the absorption rate) is unbounded. The direct study of $\E(V(X_t)\mid
t<\tau_\d)=\EE [V(X_t)]/\EE [\mathbbm{1}_{n\in\NN^d}(X_t)]$ is possible, but only allows to recover particular cases of
Theorem~\ref{thm:PNM-multi-dim} (cf.~\cite[Section 4]{champagnat-villemonais-2016f}). Our criterion is more flexible and better adapted to
multidimensional birth and death models of interacting populations.

\medskip

It is easy to check that Assumption~\ref{hyp:PNM} is satisfied in the general Lotka-Volterra birth and death process of the
introduction. Indeed, we clearly have $\bar{d}(k)\leq C k^2$ and
\begin{align*}
  \underline{d}(k) & \geq\inf_{n\in\NN^d,\ |n|=k}\sum_{i=1}^d (d_i(n)-b_i(n))-\sup_{n\in\NN^d,\ |n|=k}\sum_{i=1}^n d_i(n)\mathbbm{1}_{n_i=1} \\
  & \geq-Ck+\inf_{n\in\NN^d,\ |n|=k}\sum_{i,j=1}^d n_i(c_{ij}-\gamma_{ij})n_j
\end{align*}
for $C=\max_{i} \mu_i+\max_i \lambda_i+\max_{i,j}c_{ij}$. Under the assumptions of Theorem~\ref{thm:main-PNM-LV}, there exists $C'>0$ such that,
for all $n\in\NN^d$,
$$
\sum_{i,j=1}^d n_i(c_{ij}-\gamma_{ij})n_j\geq C'|n|^2.
$$
This entails Assumption~\ref{hyp:PNM}.

\begin{proof}[Proof of Theorem~\ref{thm:PNM-multi-dim}]
  Using~\eqref{eq:foster-lyap} and copying the arguments
  of~\cite[Sec.\,4.1.1 and Thm.\,4.1]{ChampagnatVillemonais2016}, one
  deduces that Assumption~\ref{hyp:Doeblin-local} is satisfied with
  $\nu=\delta_{(1,\ldots,1)}$ and that
  Assumption~\ref{hyp:moments-expo} is also satisfied.

  Hence we only have to find a couple of admissible functions
  $(V,\varphi)$ satisfying Assumption~\ref{hyp:main}.
   This couple of functions is given
  for all $n\in\ZZ_+^d$ by
  \begin{equation*}
    V(n)=
    \begin{cases}
      \sum_{k=1}^{|n|}\frac{1}{k^\alpha} & \text{if }n\in\NN^d, \\
      0 & \text{if }n\in\d
    \end{cases}
  \end{equation*}
  and
  \begin{equation*}
    \varphi(n)=
    \begin{cases}
      \sum_{k=|n|+1}^{+\infty}\frac{1}{k^\beta} & \text{if }n\in\NN^d, \\
      0 & \text{if }n\in\d,
    \end{cases}
  \end{equation*}
  for appropriate choices of $\alpha,\beta>1$. Note that the two
  functions are bounded, nonnegative and positive on $\NN^d$. Note also that, since $O_n=\{n\in\mathbb{N}^d,|n|\leq n+d\}$ (taking
  $o=(1,\ldots,1)$), so
  Conditions~(i-ii) of Definition~\ref{def:bonnes-fonctions} are
  clearly satisfied (in this discrete state space case, the
  condition~\eqref{eq:weak-weak-continuity-V} is trivial since, almost surely, $T_n=\tau_\d$ for all $n$ large enough). Note also that, since
  $\inf_{n\in\NN^d}V(n)>0$, Condition~\eqref{eq:survie<V} is also
  obviously satisfied.

  Hence, we only have to check~\eqref{eq:true-Lyap-1} since this of course implies that $V$ and $\varphi$ satisfy Point~(iii) of
  Definition~\ref{def:bonnes-fonctions}. So we compute
  \begin{align*}
    L\varphi(n) & =-\sum_{i=1}^d\frac{n_ib_i(n)}{(1+|n|)^\beta}+\sum_{i=1}^d \frac{n_i\mathbbm{1}_{n_i\neq
        1}d_i(n)}{|n|^\beta}-\sum_{i=1}^d \mathbbm{1}_{n_i=1}d_i(n)\sum_{k=|n|+1}^{+\infty}\frac{1}{k^\beta} \\ &
    \geq\frac{1}{|n|^\beta}\left[\underline{d}(|n|)-\frac{\bar{d}(|n|)}{\beta-1}\right],
  \end{align*}
  where we used the fact that
  $$
  \sum_{k=x+1}^{+\infty}\frac{1}{k^\beta}\leq\int_{x}^{+\infty}\frac{dy}{y^\beta}=\frac{1}{(\beta-1)x^{\beta-1}}.
  $$
  Hence it follows from Assumption~\eqref{eq:hyp-PNM-1} that there
  exists $\beta>1$ large enough such that $L\varphi(n)\geq 0$ for all
  $|n|$ large enough. This entails the first inequality in~\eqref{eq:true-Lyap-1}.

  We fix such a value of $\beta$. Using that
  $$
  \sup_{n\in\NN^d}V(n)=\sum_{k=1}^{+\infty}\frac{1}{k^\alpha}\leq1+\int_1^\infty\frac{dx}{x^\alpha}=\frac{\alpha}{\alpha-1}
  $$
  and
  $$
  \varphi(n)\geq\int_{|n|+1}^\infty\frac{dx}{x^\beta}=\frac{(1+|n|)^{1-\beta}}{\beta-1}\geq\frac{|n|^{1-\beta}}{2(\beta-1)}
  $$
  for $|n|$ large enough, we compute for such $n$
  \begin{align*}
    LV(n)+\frac{V^{1+\varepsilon}(n)}{\varphi^\varepsilon(n)} &
    \leq \sum_{i=1}^d\frac{n_ib_i(n)}{(|n|+1)^\alpha}-\sum_{i=1}^d\frac{n_id_i(n)\11_{n_i\neq 1}}{|n|^\alpha}+C|n|^{\varepsilon(\beta-1)} \\
     & \leq -\frac{\underline{d}(|n|)}{|n|^\alpha}+C|n|^{\varepsilon(\beta-1)},
  \end{align*}
  where $C=[\alpha/(\alpha-1)]^{1+\varepsilon}[2(\beta-1)]^{\varepsilon}$. Choosing $\alpha=1+\eta/2$ and
  $\varepsilon=\eta/[2(\beta-1)]$, Assumption~\eqref{eq:hyp-PNM-2} implies that $
  LV(n)+\frac{V^{1+\varepsilon}(n)}{\varphi^\varepsilon(n)}\leq 0$ for $n\not\in O_m$ with $m$ large enough. Since $\inf_{n\in
    O_m}\varphi(n)>0$, we have the second inequality in~\eqref{eq:true-Lyap-1}.
\end{proof}

\section{Application to multidimensional Feller diffusions absorbed when one of the coordinates hits 0}
\label{sec:SDE}


We consider a general multitype Feller diffusion $(X_t,t\geq 0)$ in
$\RR_+^d$, solution to the stochastic
differential equation
\begin{equation}
  \label{eq:Feller-diff}
  dX^i_t=\sqrt{\gamma_i X^i_t}dB^i_t+X^i_t\, r_i(X_t)dt,\quad 1\leq i\leq d,
\end{equation}
where $(B^i_t,t\geq 0)$ are independent standard Brownian motions, $\gamma_i$ are positive constants and $r_i$ are measurable maps
from $\RR_+^d$ to $\RR$. From the biological point of view, $r_i(x)$ represents the growth rate per individual of species $i$ in a
population of size vector $x\in\mathbb{R}_+^d$.  We shall make the following assumption.

\begin{hyp}
  \label{hyp:Feller}
  Assume 
  that, for all $i\in\{1,\ldots,d\}$, $r_i$ is locally H\"older on $\RR_+^d$ and
   that there
  exist $a>0$ and $0<\eta<1$ such that
  \begin{align}
    r_i(x)\leq a^\eta-x_i^\eta,
    \label{eq:hyp-Feller-1} 
  \end{align}
  and there exist constants $B_a>a$, $C_a>0$ and $D_a>0$ such that
  \begin{equation}
    \label{eq:hyp-Feller-3}
    \sum_{i=1}^d \11_{x_i\geq B_a} r_i(x)\leq C_a\left(\sum_{i=1}^d\11_{x_i\leq a} r_i(x)+D_a\right),\quad\forall x\in\RR_+^d.
  \end{equation}
\end{hyp}

This assumption implies in particular the non-explosion, strong
existence and pathwise uniqueness for~\eqref{eq:Feller-diff}. Indeed,
since $r_i$ is locally H\"older, standard arguments entail the strong
existence and pathwise uniqueness for~\eqref{eq:Feller-diff} up to the
explosion time. Now, Assumption~\eqref{eq:hyp-Feller-1} and standard
comparison results for one-dimensional diffusion processes (see
e.g. Theorem~1.1 in~\cite[Chapter~VI]{IkedaWatanabe1989}) entail that each coordinate of the process can be upper
bounded by the solution of the one-dimensional Feller diffusion
\begin{equation}
  \label{eq:upper-SDE-2}
  d\bar{X}^i_t=\sqrt{\gamma_i\bar{X}^i_t}dB^i_t+\bar{X}^i_t\left[a^\eta-\left(\bar{X}^i_t\right)^{\eta}\right]dt,\quad 1\leq i\leq d,
\end{equation}
with initial value $\bar{X}^i_0=X^i_0$. Since $\bar{X}^i$ is a diffusion on $\RR_+$ for which $+\infty$ is an entrance boundary and
$0$ an exit boundary, we deduce that each coordinate is non-explosive and hence that the unique solution to~\eqref{eq:Feller-diff} is
non-explosive. Moreover, the subset $\d=\mathbb{R}_+^d\setminus(0,+\infty)^d$ is an absorbing boundary for the diffusion
process~\eqref{eq:Feller-diff}, so the process $X_t$ is regularly absorbed in the sense of Section~\ref{sec:nonlinear-Lyapunov}.


Strong existence and pathwise uniqueness imply well-posedness of the
martingale problem, hence the strong Markov property hold on the
canonical space with respect to the natural filtration (see
e.g.~\cite{rogers-williams-00b}). Since the paths of $X$ are
continuous, the hitting times of closed subsets of $\R_+^d$ are
stopping times for this filtration. In addition, it follows
from It\^o's formula and the local boundedness of the coefficients of
the SDE that any measurable function $f:\RR_+^d\rightarrow\RR$ twice
continuously differentiable on $(0,+\infty)$ belongs to the domain of
the weakened generator of $X$ and
\[
Lf(x)=\sum_{i=1}^d\frac{\gamma_i x_i}{2}\frac{\partial^2 f}{\partial x_i^2}(x)+\sum_{i=1}^dx_ir_i(x)\frac{\partial f}{\partial x_i}(x),\quad\forall x\in(0,+\infty)^d.
\]
We can now state the main result of the section.

\begin{thm}
  \label{thm:FellerMain}
  Under Assumption~\ref{hyp:Feller}, the multi-dimensional Feller diffusion process $(X_t,t\geq 0)$ absorbed when one of its
  coordinates hits 0 admits a unique quasi-stationary distribution $\nu_{QSD}$ and there exist constants $C,\gamma>0$ such that, for
  all probability measure $\mu$ on $\NN^d$,
  \begin{equation*}
    \|\P_\mu(X_t\in\cdot\mid t<\tau_\d)-\nu_{QSD}\|_{TV}\leq C e^{-\gamma t},\quad\forall t\geq 0.
  \end{equation*}
\end{thm}

It is straightforward to check that Assumption~\ref{hyp:Feller} is satisfied in the competitive Lotka-Volterra case, that is when
$r_i(x)=r_i-\sum_{i=1}^dc_{ij}x_j$ with $c_{ij}\geq 0$ and $c_{ii}>0$ for all $1\leq i,j\leq d$. Hence Theorem~\ref{thm:main-SDE-LV}
is an immediate corollary of Theorem~\ref{thm:FellerMain}. Assumption~\ref{hyp:Feller} allows for other biologically relevant models. For
instance, one can consider ecosystems where the competition among individuals only acts when the population size reaches a level
$K>0$, which leads for instance to the SDE
\begin{align*}
dX^i_t=\sqrt{\gamma_i X^i_t}\,dB^i_t+X^i_t\left(r-\sum_{j=1}^d c_{ij}\, (X^j_t-K)_+\right)\,dt,\ X^i_0>0,
\end{align*}
where $c_{ij}$ are non-negative constants and $c_{ii}>0$ for all $i,j\in\{1,\ldots,d\}$. Note also that a similar approach (\textit{i.e.} using Theorem~\ref{thm:Lyapunov} with Lyapunov functions of the form $\prod_{i=1}^dh(x_i)$) can also be used to handle diffusion processes evolving in bounded boxes.

\medskip

Before giving the proof of Theorem~\ref{thm:FellerMain}, let us give some intuition about the choice of functions $V$ and $\varphi$.
The behavior of the process when one of the coordinates is close to 0 is similar to the behavior of a one-dimensional diffusion
absorbed at 0 started close to 0. This suggests to look for Lyapunov functions of the form $V(x)=\prod_{i=1}^d f(x_i)$ and
$\varphi(x)=\prod_{i=1}^d g(x_i)$ where the functions $f$ and $g$ satisfy our criterion for one-dimensional diffusions
like~\eqref{eq:upper-SDE-2}, say
\[
d Y_t=\sqrt{2 Y_t}dB_t+Y_t(r-Y_t^\eta)dt,
\]
with generator $A f(x)=x f''(x)+x(r-x^\eta)f'(x)$. In the one dimensional case, Conditions~\eqref{eq:true-Lyap-1} are restrictive
only close to 0 and close to $+\infty$. When $x\rightarrow 0$, assuming $f(x)=x^{\alpha}$ and $g(x)=x^{\beta}$ when $x$ is close to
0, we obtain
\[
A f(x)\sim \alpha(\alpha-1) x^{\alpha-1}\quad\text{and}\quad A g(x)\sim \beta(\beta-1) x^{\beta-1}.
\]
In particular, to satisfy the first part of~\eqref{eq:true-Lyap-1} close to 0, we need $\beta>1$, and to satisfy the second part
of~\eqref{eq:true-Lyap-1}, we need that
\[
\alpha(\alpha-1) x^{\alpha-1}+x^{\alpha-\varepsilon(\beta-\alpha)}\leq x^\alpha
\]
in the neighborhood of 0. This holds true if $0<\alpha<1$ and $\varepsilon(\beta-\alpha)<1$. Similarly, assuming $f(x)=a-x^{-\gamma}$
and $g(x)=x^{-\delta}$ close to $+\infty$, with $a,\gamma,\delta>0$, we obtain
\[
A f(x)\sim -\gamma x^{\gamma+\eta}\quad\text{and}\quad A g(x)\sim \delta x^{\eta-\delta}.
\]
For such functions $f$ and $g$ close to $+\infty$, the first part of~\eqref{eq:true-Lyap-1} is always satisfied and the second part
of~\eqref{eq:true-Lyap-1} requires
\[
-\gamma x^{\eta-\gamma}+a^{1+\varepsilon}x^{\delta\varepsilon}\leq x^{-\delta},
\]
when $x\rightarrow+\infty$. This holds true if $\eta-\gamma>\delta\varepsilon$.

This gives the intuition to check our criterion in dimension 1\footnote{Note that more efficient criteria exist for one-dimensional
  diffusions (see~\cite{ChampagnatVillemonais2015}). The interest of our criterion comes from the fact that it applies to 
  multi-dimensional processes.}. The multi-dimensional case of Theorem~\ref{thm:FellerMain} requires a more careful study, since we
need to consider cases where some of the coordinates of the process are close to 0 or $+\infty$, and the others belong to some compact
set.

\begin{proof}[Proof of Theorem~\ref{thm:FellerMain}]
  Up to a linear scaling of the coordinates, we can assume without loss of generality that $\gamma_i=2$ for all $1\leq i\leq d$, so
  we will only consider this case from now on. Note that Assumption~\ref{hyp:Feller} is not modified by the rescaling (up to
  appropriate changes of the constants $a$, $B_a$, $C_a$ and $D_a$).

  We divide the proof into five steps, respectively devoted to the
  construction of a function $\varphi$ satisfying the first inequality in~\eqref{eq:true-Lyap-1}, of a function $V$ satisfying
  the second inequality in~\eqref{eq:true-Lyap-1}, to the
  proof of~\eqref{eq:survie<V}, to the proof of a local Harnack
  inequality (needed to check Assumption~\ref{hyp:Doeblin-local}), and the proofs of~Assumption~\ref{hyp:Doeblin-local} and 
  of~Assumption~\ref{hyp:moments-expo}.

  \medskip\noindent\emph{Step 1: construction of a function $\varphi$ satisfying the first inequality in~\eqref{eq:true-Lyap-1}.}

  Recall the definition of the constants $a>0$ and $B_a>a$ from
  Assumption~\ref{hyp:Feller}. We use the following lemma, whose proof
  is left to the reader (see Figure~\ref{fig1} for a typical graph of
  $h_\beta$).

  \begin{lem}
    \label{lem:construction-fonction}
    There exists $M>0$ such that, for all $\beta\geq M$, there exists a function $h_\beta:\RR_+\rightarrow\RR_+$ twice continuously differentiable on $(0,+\infty)$
    such that
    $$
    h_\beta(x)=
    \begin{cases}
      4x^2/a^2 & \text{if }x\in[0,a/2], \\
      B_a^\beta (2x)^{-\beta} & \text{if }x\geq B_a,
    \end{cases}
    $$
    $h_\beta(x)\geq 1$ for all $x\in[a/2,a]$, $h_\beta$ is nonincreasing and convex on $[a,+\infty)$,
    $$
    M':=\sup_{\beta\geq M}\sup_{x\in[a/2,a]}|h'_\beta(x)|<+\infty\quad\text{and}\quad
    M'':=\sup_{\beta\geq M}\sup_{x\in[a/2,a]}|h''_\beta(x)|<+\infty.
    $$
  \end{lem}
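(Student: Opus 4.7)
I would define $h_\beta$ piecewise on four subintervals of $\RR_+$, namely $[0,a/2]$, $[a/2,a]$, $[a,B_a]$ and $[B_a,+\infty)$. The outer two pieces are prescribed by the lemma, so the work lies on the transition intervals $[a/2,a]$ and $[a,B_a]$. The crucial observation yielding the uniform bounds $M'$ and $M''$ is that I may choose $h_\beta$ on $[a/2,a]$ to be a single function \emph{independent of $\beta$}; with that choice the required uniformity becomes automatic.

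\paragraph{Step 1: a $\beta$-independent gluing on $[a/2,a]$.}
I would fix a $C^2$ function $g\colon[a/2,a]\to\RR_+$ with $g(a/2)=1$, $g'(a/2)=4/a$, $g''(a/2)=8/a^2$ (so $g$ glues $C^2$ to the parabola $4x^2/a^2$ at $a/2$), $g\geq 1$ on $[a/2,a]$, and $g(a)=g_0\geq 1$, $g'(a)=-g_1$, $g''(a)=g_2$ for fixed constants $g_1,g_2>0$. This can be done by an explicit quintic polynomial interpolation of the six boundary data, or alternatively by gluing the parabola on $[a/2,a/2+\eta]$ to a fixed decreasing convex function on $[a-\eta,a]$ using a $C^2$ partition of unity on a buffer interval (the lower bound $g\geq 1$ is then enforced by suitably choosing $g_0,g_1,g_2$, or by adding a nonnegative $C^2$ bump supported in $(a/2,a)$). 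Declaring $h_\beta:=g$ on $[a/2,a]$ makes the bounds $M':=\sup_{[a/2,a]}|g'|$ and $M'':=\sup_{[a/2,a]}|g''|$ trivially finite and independent of $\beta$.

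\paragraph{Step 2: convex, nonincreasing transition on $[a,B_a]$.}
At $x=B_a$ the prescribed power law $B_a^\beta(2x)^{-\beta}$ has value $2^{-\beta}$, first derivative $-\beta 2^{-\beta}/B_a$ and second derivative $\beta(\beta+1)2^{-\beta}/B_a^2$, all exponentially small as $\beta\to\infty$ with signs already compatible with a convex decreasing extension. On $[a,B_a]$ I would construct $h_\beta$ as a $C^2$ interpolant matching the six boundary data (three at $a$ from $g$, three at $B_a$ from the power law). A convenient scheme is to prescribe $h_\beta''$ explicitly as a nonnegative function on $[a,B_a]$—e.g.\ a convex combination of two bump profiles concentrated at the endpoints, with amplitudes tuned to match $g_2$ at $a$ and $\beta(\beta+1)2^{-\beta}/B_a^2$ at $B_a$ and total integral tuned to match $g'$—and then to integrate twice, adjusting the two integration constants so that $h_\beta(a)=g_0$ and $h_\beta(B_a)=2^{-\beta}$. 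Positivity of $h_\beta''$ yields convexity on $[a,B_a]$, and monotonicity ($h_\beta'\leq 0$) follows because $h_\beta'(a)=-g_1<0$ while $h_\beta'$ is nondecreasing up to $h_\beta'(B_a)<0$.

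\paragraph{Main obstacle.}
The genuinely delicate point is the construction on $[a,B_a]$: one must satisfy six boundary conditions while simultaneously keeping $h_\beta''\geq 0$ and $h_\beta'\leq 0$. This is exactly what forces $\beta\geq M$ for some large threshold $M$. For $\beta$ large enough the right-hand boundary data at $B_a$ is essentially $(0,0,0)$, so the transition problem becomes a finite-dimensional feasibility problem with a small exponentially decaying perturbation, and the bump amplitudes in the construction of $h_\beta''$ above remain positive; the threshold $M$ is chosen so that these explicit inequalities hold. All remaining statements of the lemma—$h_\beta\geq 0$, the prescribed formulas on $[0,a/2]$ and $[B_a,+\infty)$, $C^2$ smoothness at the gluing points, $h_\beta\geq 1$ on $[a/2,a]$, and the uniform bounds $M'$, $M''$—follow directly from the construction.
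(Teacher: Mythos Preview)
Your approach is correct but differs from the paper's in one architectural choice. The paper does \emph{not} fix a $\beta$-independent function on $[a/2,a]$. Instead it first writes down an explicit quartic
\[
P_2(x)=2^{-\beta}-\tfrac{\beta 2^{-\beta}}{B_a}(x-B_a)+\tfrac{\beta(\beta+1)2^{-\beta-1}}{B_a^2}(x-B_a)^2+C_\beta(x-B_a)^4
\]
on $[a,B_a]$, with $C_\beta$ chosen so that $P_2(a)=1$; convexity and monotonicity on $[a,B_a]$ then reduce to the single sign check $C_\beta>0$, which gives the threshold $M$. On $[a/2,a]$ the paper takes the $\beta$-dependent blend $P_1(x)=\tfrac{4x^2}{a^2}(1-\varphi(x))+\varphi(x)P_2(x)$ for a fixed cutoff $\varphi$, and the bounds $M',M''$ follow because the coefficients of $P_2$ are uniformly bounded in $\beta\geq M$ (terms such as $\beta 2^{-\beta}$ vanish and $C_\beta\to(a-B_a)^{-4}$).

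Your route---freezing $g$ on $[a/2,a]$ once and for all---makes $M',M''$ automatic and pushes every $\beta$-dependence into $[a,B_a]$. The price is a compatibility constraint you leave implicit: for a convex nonincreasing $C^2$ interpolant on $[a,B_a]$ to exist between $(g_0,-g_1,g_2)$ and the exponentially small data at $B_a$, one needs (for large $\beta$) roughly $g_1(B_a-a)>g_0$, so $g_0,g_1$ cannot be chosen independently of $B_a$. You allude to this as a ``finite-dimensional feasibility problem'', but it would strengthen the write-up to state the inequality explicitly and verify it for a concrete choice. The paper sidesteps this coupling because $P_2(a)=1$ and $P_2'(a)$ are computed rather than prescribed.
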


  \begin{figure}
    \centering
    \includegraphics[width=11cm]{./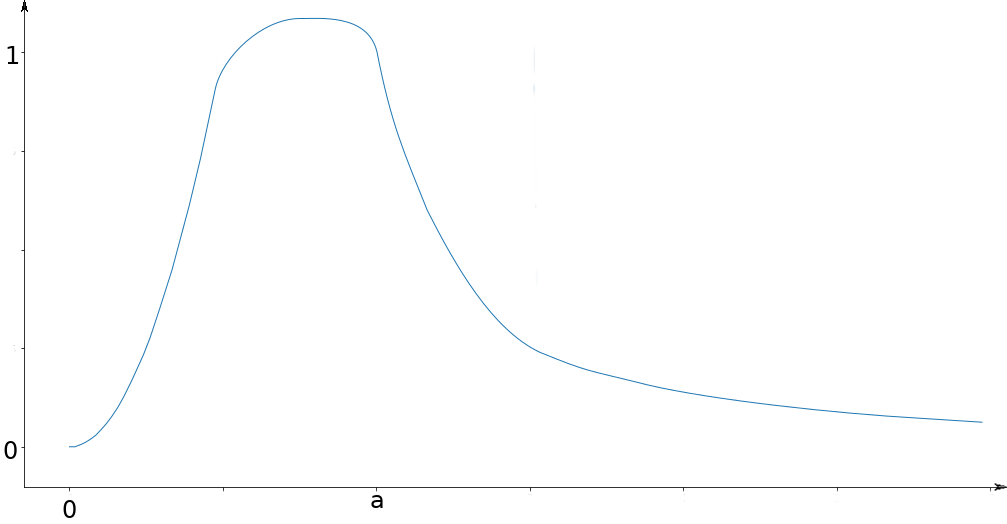}
    \caption{\label{fig1} A typical graph of $h_\beta$.}
  \end{figure}

  We set $\beta=M+(2\vee aM')/C_a+1$ and
  $$
  \varphi(x)=\prod_{i=1}^d h_\beta(x_i),\quad\forall x\in\RR_+^d.
  $$
We have
  \begin{align*}
    \frac{L\varphi(x)}{\varphi(x)} & =\sum_{i=1}^d\frac{x_ih'_\beta(x_i)r_i(x)+x_ih''_\beta(x)}{h_\beta(x_i)}.
  \end{align*}
  Now, it follows from the properties of $h_\beta$
  and Assumptions~\ref{hyp:Feller}
  that, for all $x\in\RR_+$ and all $1\leq i\leq d$,
  \begin{equation*}
    \frac{x_ih'_\beta(x_i)r_i(x)+x_ih''_\beta(x)}{h_\beta(x_i)}\geq
    \begin{cases}
      0 & \text{if }x_i\geq a, \\
      -\beta r_i(x) & \text{if }x_i\geq B_a,\\
      2r_i(x)+\frac{2}{x_i} & \text{if }x_i\leq a/2, \\
      aM'(r_i(x)-a^\eta)-a^{1+\eta}M'-aM''& \text{if }a/2\leq x_i\leq a,
    \end{cases}
  \end{equation*}
  where we used in the last inequality the fact that $r_i(x)-a^\eta\leq 0$ for all $x$. Using once again this property, we deduce
  that, for some constant $B$ independent of $\beta\geq M$,
  \begin{equation}
    \label{eq:borne}
    \frac{x_ih'_\beta(x_i)r_i(x)+x_ih''_\beta(x)}{h_\beta(x_i)}\geq
    \begin{cases}
      0 & \text{if }x_i\geq a, \\
      -\beta r_i(x) & \text{if }x_i\geq B_a,\\
      (2\vee aM')r_i(x)+\frac{2}{x_i}-B & \text{if }x_i\leq a.
    \end{cases}
  \end{equation}
  Hence, for all $x\in\RR_+^d$,
  \begin{align*}
    \frac{L\varphi(x)}{\varphi(x)} & \geq-\beta \sum_{i=1}^d\mathbbm{1}_{x_i\geq B_a}r_i(x)+\sum_{i=1}^d\mathbbm{1}_{x_i\leq a}\left((2\vee aM') r_i(x)+\frac{2}{x_i}\right) - dB.
  \end{align*}
This and Assumption~\eqref{eq:hyp-Feller-3} imply that
\begin{align*}
  \frac{L\varphi(x)}{\varphi(x)} & \geq -\sum_{i=1}^d\mathbbm{1}_{x_i\geq B_a}r_i(x)+2\sum_{i=1}^d\mathbbm{1}_{x_i\leq
    a}\frac{1}{x_i}-dB-(2\vee aM')D_a \\
  &\geq \sum_{i=1}^d\left(x_i^\eta+\frac{2}{x_i}\right)-B',
\end{align*}
for some constant $B'$, where we used Assumption~\eqref{eq:hyp-Feller-1} in the last inequality.

Hence, there exist $n\geq 1$ and a constant $C>0$ such that
\begin{align*}
L\varphi(x)\geq -C\11_{x\in O_n},
\end{align*}
since $O_n=\{x\in[\frac{1}{n+1},+\infty)^d,\,\sum x_i\leq n+1\}$ (we equip $\mathbb{R}_+^d$ with the $L^1$ distance).
This ends the proof that $\varphi$ satisfies the first inequality in~\eqref{eq:true-Lyap-1}.

\medskip\noindent\emph{Step 2: construction of a function $V$
  satisfying~\eqref{eq:true-Lyap-1} and verification that
  $(V,\varphi)$ is a couple of admissible functions.}

  For $V$, we define
  $$
  V(x)=\prod_{i=1}^d g(x_i),\quad\forall x\in\RR_+^d,
  $$
  where the function $g:\RR_+\rightarrow\RR_+$ is twice continuously differentiable on $(0,+\infty)$, increasing concave and such
  that
  $$
  g(x)=
  \begin{cases}
    x^{\gamma} & \text{if }x\leq 1 \\
    \delta-x^{-\eta/2} & \text{if }x\geq 2,
  \end{cases}
  $$
  for some constants $\gamma<1$ and $\delta>0$ and where $\eta$ is
  defined in Assumption~\eqref{eq:hyp-Feller-1}. Since $g'(1)=\gamma$
  and $g'(2)=\eta 2^{-2-\eta/2}$, it is possible to find $\delta>0$
  such that such a function $g$ exists as soon as
  $\eta 2^{-2-\eta/2}<\gamma$. Hence, we shall assume that
  $\gamma$ belongs to the non-empty interval $(\eta
  2^{-2-\eta/2},1)$. We have
  \begin{align*}
    \frac{L V(x)}{V(x)} & =\sum_{i=1}^d\frac{x_ig'(x_i)r_i(x)+x_ig''(x_i)}{g(x_i)}
  \end{align*}
  and
  \begin{align*}
    \frac{x_ig'(x_i)r_i(x)+x_ig''(x_i)}{g(x_i)}\leq
    \begin{cases}
\displaystyle      \gamma r_i(x)-\frac{\gamma(1-\gamma)}{x_i} & \text{if }x_i\leq 1, \\
\displaystyle      2a^\eta\sup_{1\leq x\leq 2}g'(x) & \text{if }1\leq x_i\leq 2, \\
\displaystyle      \frac{\eta r_i(x) x_i^{-\eta/2}}{2(\delta-x_i^{-\eta/2})} & \text{if }x_i\geq 2.
    \end{cases}
  \end{align*}
We deduce from Assumptions~\eqref{eq:hyp-Feller-1} that there exist constants $B',B''>0$ such that
  \begin{align}
  \label{eq:LV-bounded}
    \frac{x_ig'(x_i)r_i(x)+x_ig''(x)}{g(x_i)}\leq B'-
    \begin{cases}
      \frac{\gamma(1-\gamma)}{x_i} & \text{if }x_i\leq 1, \\
      0 & \text{if }1\leq x_i\leq 2, \\
      B''x_i^{\eta/2} & \text{if }x_i\geq 2.
    \end{cases}
  \end{align}
  Thus, since $h_{\beta}(x_i)\geq D_{\beta} \left(x_i^2\wedge x_i^{-\beta}\right)$ for some constant $D_{\beta}>0$ and since $g(x_i)\leq \delta$,
  \begin{align*}
    \frac{L V(x)}{V(x)}+\frac{V(x)^\varepsilon}{\varphi(x)^\varepsilon} & \leq
    B'd-\gamma(1-\gamma)\sum_{i=1}^d\frac{\mathbbm{1}_{x_i\leq 1}}{x_i}-B'' \sum_{i=1}^d\mathbbm{1}_{x_i\geq 2}x_i^{\eta/2} \\ &
    +\left(\frac{\delta}{D_{\beta}}\right)^{d\varepsilon}\prod_{i=1}^d
    \left(x_i^{\varepsilon\beta}\vee x_i^{-2\varepsilon}\right) \\
    & \leq B'd+\gamma(1-\gamma)+B'' 2^{\eta/2}-\gamma(1-\gamma)\left(\inf_i x_i\right)^{-1}-B''\left(\sup_i x_i\right)^{\eta/2} \\ &
    +\left(\frac{\delta}{D_{\beta}}\right)^{d\varepsilon}\left[\left(\sup_i x_i\right)^{\beta d\varepsilon}+\left(\inf_i
        x_i\right)^{-2d\varepsilon}\right].
  \end{align*}
  Therefore, choosing $\varepsilon>0$ such that $d(1+\alpha)\varepsilon<1$ and $\beta d\varepsilon<\eta/2$,
  $LV(x)+V(x)^{1+\varepsilon}/\varphi(x)^\varepsilon\leq 0$ for all $x\in\RR_+^d$ such that $\inf_i x_i$ is small enough or $\sup_i
  x_i$ is big enough. Since $LV$ is bounded from above by~\eqref{eq:LV-bounded} and since $V$ and $\varphi$ are positive continuous on any compact subset of $(0,+\infty)^d$, we have proved Condition~\eqref{eq:true-Lyap-1}.

  We can now check that $(V,\varphi)$ is a couple of admissible functions. First, $V$ and $\varphi$ are both bounded, positive on
  $(0,+\infty)$ and vanishing on $\d$. They both belong to the domain of the weakened infinitesimal generator of $X$. Since the
  function $g/h_\beta$ is positive continuous on $(0,+\infty)$ and
  $$
  \frac{g(x)}{h_\beta(x)}=
  \begin{cases}
    x^{\gamma-2} & \text{if }x\leq 1\wedge a/2, \\
    (\delta-x^{-\eta/2})(2x)^\beta/B_a^\beta & \text{if }x\geq B_a\vee 2,
  \end{cases}
  $$
  we deduce that~\eqref{eq:phi=o(V)} holds true. Condition~\eqref{eq:weak-weak-continuity-V} is also clear since $X_{T_n}\rightarrow
  X_{\tau_\d}$ almost surely. Finally, since $LV$ is bounded from above and $L\varphi$ is bounded from below, we have proved that
  $(V,\varphi)$ is a couple of admissible functions.

  \medskip\noindent\emph{Step 3: proof of~\eqref{eq:survie<V}.}

  Using the upper bound $X^i_t\leq\bar{X}^i_t$ for all $t\geq 0$ and $1\leq i\leq d$, where $\bar{X}^i$ is solution to the
  SDE~\eqref{eq:upper-SDE-2}, and noting that the processes $(\bar{X}^i)_{1\leq i\leq d}$ are independent, we have for all
  $x\in\RR_+^d$ and all $t_2>0$,
  \begin{align*}
    \PP_x(t_2<\tau_\d)\leq \prod_{i=1}^d\PP_{x_i}(\bar{X}^i_{t_2}>0).
  \end{align*}
  Now, there exist constants $D$ and $D'$ such that 
  \begin{equation}
    \label{eq:step3}
    \PP_{x_i}(\bar{X}^i_{t_2}>0)\leq (Dx_i)\wedge 1\leq D'g(x_i)\quad\text{for all }x_i>0.   
  \end{equation}
  To prove this, we can consider a scale function $s$ of the diffusion $\bar{X}^i$ such that $s(0)=0$ and $s(x)>0$ for $x>0$. Using
  the expression of the scale function and the speed measure (see e.g.~\cite[V.52]{rogers-williams-00b}), one easily checks that
  $s(x_i)\sim \alpha x_i$ when $x_i\rightarrow 0$ for some $\alpha\neq 0$ and that Proposition~4.9 of~\cite{ChampagnatVillemonais2015} is satisfied, so that
  $\PP_{x_i}(\bar{X}^i_{t_2}>0)\leq M s(x_i)$ for some $M>0$. Since $s(x_i)\sim \alpha x_i$ when $x_i\rightarrow 0$,~\eqref{eq:step3} is
  proved and hence~\eqref{eq:survie<V} holds true.

\medskip\noindent \emph{Step 4: Harnack inequality for $u$.} 
Consider a bounded nonnegative measurable function $f$ and define the
application $u:(t,x)\in\R_+\times (0,+\infty)^d\mapsto \E_x(f(X_t)\mathbbm{1}_{t<\tau_\d})$. Our aim is to prove that, for all $m\geq 1$, there exist two constants
$N_m>0$ and $\delta_m>0$, which do not depend on $f$, such that
\begin{align}
\label{eq:Harnack-u}
u(\delta_m^2,x)\leq N_m u(2\delta_m^2,y),\text{ for all $x,y\in O_m$ such that }|x-y|\leq \delta_m/2.
\end{align}


We fix $m\geq 1$ and we will omit the indices $m$ for the constants $\delta$ and $N$ for the rest of Step~4. Let $K$ be a compact set
with $C^\infty$ boundary such that $O_m\subset K\subset (0,+\infty)^d$ and such that $d(O_m,\d K)>0$. We set $\delta=d(O_m,\d K)/3$ and
$\tau=\inf\{t\geq 0: X_t\in\partial K\}$. Let $x$ and $y$ be fixed in $K$ such that $|x-y|\leq\delta/2$. We define $\mu_{x}$ and
$\mu_y$ as the joint law of $(\delta^2-\tau\wedge \delta^2,X_{\tau\wedge\delta^2})$ starting from $X_0=x$ and
$(2\delta^2-\tau\wedge(2\delta^2),X_{\tau\wedge(2\delta^2)})$ starting from $X_0=y$, respectively. It follows from Lusin's theorem
(see e.g.~\cite[Thm.\,7.5.2]{Dudley2002}) that there exists a sequence $(h_n)_{n\geq 1}$ of bounded $C^\infty$ functions from
$([0,\infty[\times \d K)\cup (\{0\}\times K)$ to $\R_+$ such that $(h_n)_{n\geq 1}$ converges bounded pointwisely toward
$u$, $(\mu_x+\mu_y)$-almost everywhere.
 For all $n\geq 1$, we let
 $u_n:[0,+\infty[\times K\rightarrow \R$ be the solution to the linear parabolic equation 
\begin{align*}
\begin{cases}
\displaystyle \d_t u_n = \sum_{i=1}^d x_i\frac{\d^2 u_n}{\d x_i^2}+ x_ir_i(x)\frac{\d u_n}{\d x_i},\ \forall (t,x)\in (0,\infty)\times \text{int}(K),\\
\displaystyle u_n(t,x)=h_n(t,x),\ \forall (t,x)\in ([0,\infty[\times \d K)\cup (\{0\}\times K).
\end{cases}
\end{align*}
By~\cite[Thm~5.1.15]{Lunardi1995}, for all $n\geq 1$, $u_n$ is of regularity $C^{1,2}$. In particular, applying It\^o's formula to
$s\mapsto u_n(\delta^2-s,X_s)$ at time $\tau\wedge \delta^2$ and taking the expectation, one deduces that,
\begin{align*}
u_n(\delta^2,x)&=\E_x\left[u_n(\delta^2-\tau\wedge \delta^2,X_{\tau\wedge \delta^2})\right]=\mu_x(h_n)
\end{align*}
By Lebesgue's theorem, the last quantity converges when $n\rightarrow+\infty$ to
\begin{align*}
\mu_x(u) & =\E_x\left(\11_{\tau\leq \delta^2}\, u(\delta^2-\tau,X_{\tau})\right)+\E_x\left(\11_{\tau> \delta^2}\, u(0,X_{\delta^2})\right)\\
&=\E_x\left(\11_{\tau\leq \delta^2}\, \E_{X_{\tau}}(f(X_{\delta^2-\tau})\11_{\delta^2-\tau<\tau_\d})\right)+\E_x\left(\11_{\tau> \delta^2}f(X_{\delta^2})\right)\\
&=\E_x(f(X_{\delta^2})\11_{\delta^2<\tau_\d})=u(\delta^2,x),  
\end{align*}
where we used the strong Markov property at time $\tau$. Similarly, $u_n(2\delta^2,y)$ converges to $u(2\delta^2,y)$.

Using the Harnack inequality provided by~\cite[Theorem~1.1]{KrylovSafonov1980} (with $\theta=2$ and $R=\delta$), we deduce that there
exists a constant $N>0$ which does not depend on $f$, $x,y\in K$ such that $|x-y|\leq\delta/2$ nor on $n$ such that
\begin{align*}
u_n(\delta^2,x)\leq N u_n(2\delta^2,y). 
\end{align*}
Hence, we deduce that
\begin{align}
\label{eq:Harnack-un}
u(\delta^2,x)\leq N u(2\delta^2,y),\text{ for all $x,y\in K$ such that }|x-y|\leq \delta/2
\end{align}
and~\eqref{eq:Harnack-u} is proved.

\medskip\noindent\emph{Step~5 : proof that
  Assumptions~\ref{hyp:Doeblin-local} and~\ref{hyp:moments-expo} are
  satisfied.} Fix $x_1\in O_1$ and let $\nu$ denote the conditional
law $\P_{x_1}(X_{\delta_1^2}\in \cdot\mid \delta_1^2<\tau_\d)$. Then
the Harnack inequality~\eqref{eq:Harnack-u} entails that, for all
$x\in O_1$ such that $|x-x_1|\leq \delta_1/2$ and all measurable nonnegative bounded $f$
on $(0,+\infty)^d$,
\[
\E_x\left[f(X_{2\delta^2_1})\11_{2\delta^2_1<\tau_\d}\right]\geq\frac{1}{N_1}\E_{x_1}\left[f(X_{\delta^2_1})\11_{\delta^2_1<\tau_\d}\right].
\]
This means that
\begin{align*}
  \P_x(X_{2\delta_1^2}\in \cdot)\geq \frac{\P_{x_1}(\delta_1^2<\tau_\d)}{N_1}\,\nu.
\end{align*}
Now, let $m\geq 1$. Since $O_m$ is bounded, connected and at a positive distance of $\d$, $\P_x(X_1\in O_1\cap B(x_1,\delta_1/2))$ is
uniformly bounded from below in $O_m$ by a positive constant $M_m$. Therefore, Markov's property implies that, for all $x\in O_m$,
\begin{align*}
  \P_x(X_{1+2\delta_1^2}\in \cdot)\geq \frac{\P_{x_1}(\delta_1^2<\tau_\d) \,M_n}{N_1}\,\nu.
\end{align*}
This is the first part of Assumption~\ref{hyp:Doeblin-local}.

The second part of Assumption~\ref{hyp:Doeblin-local} is also a consequence
of~\eqref{eq:Harnack-u}. Indeed, for any fixed $m$ and for all
$t\geq 2\delta_m^2$, this equation applied to
$f(x)=\P_x(t-2\delta_m^2<\tau_\d)$ and the Markov property entail that
\begin{align*}
\P_x(t-\delta_m^2<\tau_\d)\leq N_m \P_y(t<\tau_\d),\text{ for all $x,y\in O_m$ such that }|x-y|\leq \delta_m/2.
\end{align*}
Since $s\mapsto \P_x(s<\tau_\d)$ is non-increasing, we deduce that
\begin{align*}
\P_x(t<\tau_\d)\leq N_m \P_y(t<\tau_\d),\text{ for all $x,y\in O_m$ such that }|x-y|\leq \delta_m/2.
\end{align*}
Since $O_m$ has a finite diameter and is connected, we deduce that there exists $N'_m$ such that, for all $t\geq 2\delta_m^2$,
\begin{align*}
\P_x(t<\tau_\d)\leq N'_m \P_y(t<\tau_\d),\text{ for all $x,y\in O_m$.}
\end{align*}
Now, for $t\leq 2\delta_m^2$, we simply use the fact that $x\mapsto \P_x(2\delta_m^2<\tau_\d)$ is uniformly bounded from below on $O_m$ by a constant $1/N''_m>0$. In particular, 
\begin{align*}
\P_x(t<\tau_\d)\leq 1\leq N''_m \P_y(2\delta_m^2<\tau_\d)\leq N''_m \P_y(t<\tau_\d),\text{ for all $x,y\in O_m$.}
\end{align*}
As a consequence, the second part of
Assumption~\ref{hyp:Doeblin-local} is satisfied.

Assumption~\ref{hyp:moments-expo} is a direct consequence of the domination by solutions to~\eqref{eq:upper-SDE-2}, since these
solutions come down from infinity and hit $0$ in finite time almost surely (cf.\ e.g.~\cite{CCLMMS09}).

\bigskip Finally, we deduce from Steps~1, 2, 3 and~5 that all the assumptions of Theorem~\ref{thm:Lyapunov} are satisfied. This concludes the proof of Theorem~\ref{thm:FellerMain}.
\end{proof}

\bibliographystyle{abbrv}
\bibliography{biblio-denis,biblio-math}

\end{document}